\date{}
\newcommand{\Z}{{\mathbb Z}}
\newcommand{\R}{{\mathbb R}}
\newcommand{\C}{{\mathbb C}}
\newcommand{\N}{{\mathbb N}}
\newcommand{\D}{{\mathbb D}}
\newcommand{\Hi}{{\mathcal{H}}}
\DeclareMathOperator*{\esssup}{ess\,sup}
\newtheorem{theorem}{Theorem}[section]
\newtheorem{remark}[theorem]{Remark}
\newtheorem{lemma}[theorem]{Lemma}
\newtheorem{prop}[theorem]{Proposition}
\newtheorem{coro}[theorem]{Corollary}
\newtheorem{defi}[theorem]{Definition}
\renewcommand{\Re}{\mathrm{Re} \, }
\newcommand{\tr}{\mathrm{tr} }
\newcommand{\dd}{\mathrm{d}}
\newcommand{\inn}{\mathrm{in}}
\newcommand{\out}{\mathrm{out}}
\newcommand{\Hd}{\mathrm{H}}
\newcommand{\Pa}{\mathrm{P}}
\title{Dynamics of Unitary Operators}
\author[D.\ Damanik]{David Damanik}
\thanks{D.\ D.\ was supported in part by a Simons Fellowship and NSF grant DMS--1067988.}
\address{Department of Mathematics, Rice University, Houston, TX~77005, USA}
\author[J.\ Fillman]{Jake Fillman}
\thanks{J.\ F.\ was supported in part by NSF grant DMS--1067988.}
\author[R.\ Vance]{Robert Vance}
\begin{document}

\maketitle

\begin{abstract}
We consider the iteration of a unitary operator on a separable Hilbert space and study the spreading rates of the associated discrete-time dynamical system relative to a given orthonormal basis. We prove lower bounds for the transport exponents, which measure the time-averaged spreading on a power-law scale, in terms of dimensional properties of the spectral measure associated with the unitary operator and the initial state. These results are the unitary analog of results established in recent years for the dynamics of the Schr\"odinger equation, which is a continuum-time dynamical system associated with a self-adjoint operator. We discuss how these general results may be studied by means of subordinacy theory in cases where the unitary operator is given by a CMV matrix. An example of particular interest in which this scenario arises is given by a time-homogeneous quantum walk on the integers. For the particular case of the time-homogeneous Fibonacci quantum walk, we illustrate how these components work together and produce explicit lower bounds for the transport exponents associated with this model.
\newline
\end{abstract}

\noindent \textbf{Mathematics Subject Classification (2010).} 42C05; 82B41.
\newline

\noindent \textbf{Keywords.} unitary operators, quantum dynamics, quantum walks.

\section{Introduction}

This paper is concerned with the dynamics of unitary operators acting on Hilbert spaces.  Specifically, we  fix a (separable) Hilbert space $\Hi$, an orthonormal basis $(\varphi_n)_{n \in A}$ for $\Hi$,  a unitary operator $U : \Hi \to \Hi$, and a unit vector $\psi \in \Hi$ and consider the discrete-time evolution $\psi(k) = U^k \psi$. The orthonormal basis $(\varphi_n)_{n \in A}$ will be indexed by a suitable countable set $A$ - in general we may always take $A = \Z_+$, but other countable sets may be more natural in certain settings.  For example, in the case $\Hi = \ell^2(\Z^d)$, it is natural to use the orthonormal basis $(\delta_n)_{n \in \Z^d}$.  Our goal is to give as complete a dynamical picture as possible for the spreading of $U^k \psi$ with respect to the basis $(\varphi_n)_{n \in A}$ in terms of spectral characteristics of $U$.  By the spectral theorem, there is a Borel probability measure $\mu_{\psi}^U $ on the circle $\partial \D = \{ z \in \C : |z| = 1 \}$ such that
\[
\left\langle \psi, f(U) \psi \right\rangle = \int_{\sigma(U)} \! f(z) \, \dd \mu_{\psi}^U(z)
\]
for any bounded, Borel measurable function $f$ on $\partial \D$.  Typically, we will suppress the dependence of $\mu_{\psi}^U$ on $U$ and simply write $\mu_\psi$.

There is an extensive literature devoted to the ``self-adjoint case,'' that is, the time-evolution associated with the Schr\"odinger equation. Given a self-adjoint operator $H$ in $\Hi$ and a unit vector $\psi \in \Hi$, one studies the spreading of $e^{-itH} \psi$ relative to a given orthonormal basis. We refer the reader to \cite{DT, G1, G2, GSB, L} and the references therein. Despite the obvious analogies, much less is known about the unitary case -- see \cite{BGVW, GVWW, OS}, for example. One may be tempted to reduce questions about the unitary case to known results in the self-adjoint case. In our opinion this has at least two drawbacks. The self-adjoint case leads naturally to Ces\`aro averages in the continuous time-parameter and hence such results have no meaning in the unitary setting, where the time-parameter is discrete. Moreover, a problem in the unitary case is often given by an explicit unitary operator and one wants to take advantage of the (usually) simple structure of the operator, which may not be present in any associated self-adjoint operator. Specifically, we will discuss the class of CMV matrices later in the paper and there is an extensive set of tools one can use to analyze such an operator and verify the input to the general dynamical results in the unitary case. It will be obvious that it is extremely desirable to stay within the class of CMV matrices when proving delicate spectral properties such as $\alpha$-continuity, which are often difficult to establish for a given operator.

As a consequence, we prefer to work out the analogies between the self-adjoint case and the unitary case. That is, we will establish general results in the unitary case that mirror known results in the self-adjoint case. The advantage of this approach is that we obtain results that can readily be applied to a given setting in which a time-evolution is given by the iteration of a unitary operator.

For example, there has been a lot of activity recently in the study of quantum walks; compare \cite{CGMV, CGMV2, J11, J12, JM} and references therein. A quantum walk is indeed given by the iteration of a unitary operator, and hence our general results below apply directly to any quantum walk. More specifically, a time-homogeneous quantum walk on the integers can be related in a simple way to a CMV matrix, as pointed out in \cite{CGMV}. Thus, for quantum walks of this kind, one wants to take advantage of the tools available that allow one to prove the required spectral continuity results for spectral measures. One of the primary tools here is subordinacy theory, which derives spectral continuity from solution estimates. The latter may be obtained by an analysis of the transfer matrices associated with a given CMV matrix.

The structure of the paper is as follows. In Section~\ref{s.2} we introduce some basic quantities that capture the aspects of the dynamics in which we are interested. In particular, the transport exponents are defined. In Section~\ref{s.3} we establish lower bounds for the transport exponents in terms of regularity of the spectral measure. In particular, the Hausdorff dimension and the packing dimension of the spectral measure play a crucial role. In Section~\ref{s.4} we focus on the special case where the unitary operator in question is given by a CMV matrix. We discuss how subordinacy theory provides an elegant way of establishing the spectral regularity that was shown to imply lower transport bounds. Then we turn to the connection between quantum walks on the integers and CMV matrices and explain how the material from earlier parts of the paper gives a useful framework in which the spreading rates of a quantum walk on the integers may be studied. Finally, we use the Fibonacci quantum walk as an example for which we implement this overall strategy, and derive explicit lower bounds for the spreading rates associated with this model from the connection with CMV matrices, an analysis of the solutions which provides the input to subordinacy theory and hence implies spectral regularity, and the derivation of the lower bounds for the transport exponents from these spectral regularity properties.

\bigskip

\noindent\textbf{Acknowledgements.} We are grateful to the anonymous referees for several useful suggestions that led to marked improvements of the paper.

\section{Preliminaries and Basic Definitions}\label{s.2}

We shall mirror the notation and development found in \cite{DT}.  Let $\Hi$ be a complex separable Hilbert space, $U$ a unitary operator on $\Hi$, and $\psi \in \Hi$ such that $\| \psi\| = 1$.  We are interested in the time evolution of the vector $\psi$, that is, $\psi(k) = U^k \psi$.  Let $(\varphi_n)_{n \in A}$ be an orthonormal basis for $\Hi$, indexed by a suitable countable set $A$ -- in this paper, we will consider $A=\Z_+^d,\Z^d$ as appropriate.  To describe the spreading of $\psi$ with respect to the basis $(\varphi_n)_{n \in A}$, we first define
\[ a_{\psi}(n,k) = \left| \left\langle \varphi_n , \psi(k) \right\rangle \right|^2, \]
which can be thought of as the probability that $\psi$ is in the state $\varphi_n$ at time $k$. We shall also be interested in the Ces\`aro time-averaged probabilities, given by
\[
\tilde{a}_{\psi}(n,K) = \frac{1}{K} \sum_{k=0}^{K-1} a_{\psi}(n,k).
\]

Throughout the paper, we shall be interested in Ces\`aro averages of quantities, so, for a function $f: \Z_{\geq 0} \to \R$, we introduce the notation $\langle f \rangle$ to denote the average of $f$.  More precisely, we set
\[
\langle f \rangle(K) = \frac{1}{K} \sum_{j=0}^{K-1} f(j).
\]
For example, in this notation, one could write $\tilde{a}_{\psi}(n,K) = \langle a_{\psi}(n,\cdot) \rangle(K)$.

For fixed $k,K$, a straightforward computation reveals
\[ \sum_{n \in A} a_{\psi}(n,k) = \sum_{n \in A} \tilde{a}_{\psi}(n,K) = 1 \]
since $(\varphi_n)_{n \in A}$ is an orthonormal basis for $\Hi$.

Given $R \geq 0$, we are interested in the probability of finding $\psi$ within a ball of radius $R$. Specifically, we define
\begin{align*}
P_{\inn}^{\psi}(R,k) & = \sum_{|n| \leq R} a_{\psi}(n,k), \\
P_{\out}^{\psi}(R,k) & = \sum_{|n| > R} a_{\psi}(n,k) = 1 - P_{\inn}^{\psi}(R,k),
\end{align*}
and their time-averaged counterparts
\begin{align*}
\tilde{P}_{\inn}^{\psi}(R,K) & = \sum_{|n| \leq R} \tilde{a}_{\psi}(n,K) = \langle P_{\inn}(R,\cdot) \rangle(K) , \\
\tilde{P}_{\out}^{\psi}(R,K) & = \sum_{|n| > R} \tilde{a}_{\psi}(n,K).
\end{align*}
In the above formulae, $|n|$ denotes the $\ell^1$ norm of $n$, that is, $|n| = |n_1| +\cdots + |n_d|$.

We shall also describe transport behavior of $U$ and $\psi$ in terms of the moments of the position operator, defined by
\[ |X|^p_{\psi}(k) = \sum_n \left( |n|^p +1 \right)a_{\psi}(n,k), \]
with the time-averaged counterparts
\[ \left\langle |X|^p_{\psi} \right\rangle(K) = \sum_n \left( |n|^p +1 \right) \tilde{a}_{\psi}(n,K) = \frac{1}{K} \sum_{k=0}^{K-1} |X|^p_{\psi}(k). \]

\begin{remark} \label{r.moments.pout}
It is helpful to observe that
$$
\left\langle |X|^p_\psi \right\rangle(K) \geq R^p \tilde{P}_{\out}(R,K)
$$
for all $R,K$
\end{remark}

We would like to compare the growth of $|X|^p_{\psi}(k)$ to polynomial growth of the form $k^{\beta p}$ for a suitable exponent $\beta$.  In light of this, the following transport exponents are natural objects to consider
\begin{align*}
\beta_{\psi}^+(p) & = \limsup_{k \to \infty} \frac{\log \left( |X|^p_{\psi}(k) \right)}{p \log(k)}, \\
\beta_{\psi}^-(p) & = \liminf_{k \to \infty} \frac{\log \left( |X|^p_{\psi}(k) \right)}{p \log(k)}, \\
\tilde{\beta}_{\psi}^+(p) & = \limsup_{K \to \infty} \frac{\log \left( \left\langle |X|^p_{\psi} \right\rangle (K) \right)}{p \log(K)}, \\
\tilde{\beta}_{\psi}^-(p) & = \liminf_{K \to \infty} \frac{\log \left( \left\langle|X|^p_{\psi}\right\rangle(K) \right)}{p \log(K)}.
\end{align*}

By Jensen's inequality, the functions $\beta^{\pm}_{\psi}$ and $\tilde{\beta}^{\pm}_{\psi}$ are non-decreasing functions of $p$.  For a detailed proof of this, the interested reader may consult Lemma 2.7 of \cite{DT}, for example.

Usually, the initial state $\psi$ will be explicitly given or clear from context, so we will often suppress the dependence of the dynamical quantities on $\psi$, and simply write $a(n,k), P_{\inn}(R,k), |X|^p(k)$, etc.

\section{Transport and Singular Continuous Spectrum}\label{s.3}

In this section we prove estimates for the dynamical quantities introduced in the previous section that hold for general unitary operators. We begin with results that rely on suitable regularity properties of the spectral measure associated with the operator $U$ and the initial state $\psi$. Specifically, we first consider the case where the measure is uniformly $\alpha$-H\"older continuous for some $\alpha > 0$ and then study the case of measures that have a non-trivial $\alpha$-continuous component, that is, measures that are not singular with respect to $\alpha$-dimensional Hausdorff measure. In fact, the latter case can be understood by approximation with measures covered by results in the former case. As a consequence, we obtain quantitative estimates in terms of the most continuous component of the spectral measure. One should emphasize that these estimates are strictly one-sided. That is, based on the analogy to the self-adjoint case, one may expect that in some cases, transport can be fast even if the spectral measure is highly singular.

\subsection{Uniformly $\alpha$-H\"older Continuous Spectral Measures}

We shall first be interested in a description of continuity and singularity of measures supported on  $\partial \D$. To that end, let us first recall the notion of uniform H\"{o}lder continuity for such measures.

\begin{defi}
We will say that a measure $\mu$ on $\partial \D$ is uniformly $\alpha$-H\"older continuous {\rm (}U$\alpha$H{\rm )} if there exists a constant $C>0$ such that for every arc $I \subseteq \partial \D$, we have $\mu(I) < C|I|^{\alpha}$, where $| \cdot |$ shall be taken to mean one-dimensional Lebesgue measure on $\partial \D$.
\end{defi}

We remark that a measure $\mu$ on $\partial \D$ which is U$\alpha$H must necessarily be finite, for $\mu(\partial \D) \leq C |\partial \D|^{\alpha} < \infty$.
\newline

The following lemma provides the critical estimate for the results that follow.

\begin{lemma}  \label{UaH.integral.estimate}
Suppose $\mu$ is a  U$\alpha$H measure on $\partial \D$ for $0 \leq \alpha < 1$. There exists a constant $\gamma > 0$ such that
\[ \int_{\partial \D} \left| \frac{\overline{z}^Kw^K - 1}{\overline{z} w - 1 } \right| \, \mathrm d \mu(w) \leq \gamma K^{1-\alpha} \]
for all $z \in \partial \D$ and all $K \in \Z_+$. In particular, $\gamma$ depends on neither $z$ nor $K$.
\end{lemma}

\begin{proof}
By uniformity of $\mu$, it is no loss of generality to assume that $z = 1$, so we may consider the integral
\[ \int_{\partial \D} \left| \frac{w^K - 1}{w - 1 } \right| \, \mathrm d \mu(w). \]

The case $\alpha = 0$ is trivial: take $\gamma = \mu(\partial \D)$ and observe that
\[
\left| \frac{w^K-1}{w-1} \right| = \left| 1+w+w^2 + \cdots + w^{K-1} \right| \leq K
\]
for all $w \in \partial \D$ by the triangle inequality, so that the integral in question is bounded by $\mu(\partial \D) \cdot K$.
\newline

Next, suppose $0 < \alpha < 1$.  For each $K$, there are three parts of the integral that we will control:
\begin{align*}
S_1 & = \{ z \in \partial \D : \Re \! (z) \leq 0 \}, \\
S_2 & = \left\{ e^{i \theta} : -\frac{\pi}{2K} \leq \theta \leq \frac{\pi}{2K}  \right\}, \\
S_3 & = \left\{ e^{i \theta} : \frac{\pi}{2K} < \theta \leq \frac{\pi}{2}  \textup{ or } -\frac{\pi}{2} \leq \theta < -\frac{\pi}{2K}\right\}.
\end{align*}
It is easy to see that $\left| \frac{w^K - 1}{w - 1} \right| \leq \sqrt{2}$ on $S_1$, and hence
\[
\int_{S_1} \! \left| \frac{w^K - 1}{w - 1} \right| \,  \mathrm d \mu(w) \leq \sqrt{2} \mu(\partial \D)
\]
Since $\mu$ is U$\alpha$H, choose $C$ such that $\mu(I) \leq C|I|^{\alpha}$ for arcs $I$.  In particular,
\[
\mu(S_2) \leq C\left( \frac{\pi }{K} \right)^{\alpha} = C_1 K^{-\alpha}
\]
with $C_1 =  C \pi^{\alpha} $.
\newline

Since
$
\left| \frac{w^K-1}{w-1} \right|  \leq K
$
for all $w \in S_2$ (indeed for all $w \in \partial \D$), we have
\begin{align*}
\int_{S_2} \! \left| \frac{w^K - 1}{w - 1} \right| \, \mathrm d \mu(w) \leq K \mu(S_2) \leq C_1 K^{1-\alpha}.
\end{align*}

Lastly, we consider $S_3$. We can decompose
\[
S_3 \subseteq \bigcup_{l = 1}^{\left\lfloor \sqrt{K} \right\rfloor} (A_l \cup B_l)
\]
with $A_l = \left\{ e^{i \theta} : \frac{l^2 \pi}{2K} < \theta \leq \frac{(l+1)^2 \pi}{ 2K} \right\} $ and $B_l = \left\{ e^{i \theta} : -\frac{(l+1)^2 \pi}{2K} \leq \theta < -\frac{l^2 \pi}{ 2K} \right\} $. We have
\[
\mu(A_l), \mu(B_l) \leq C \left( \frac{(l+1)^2\pi}{2 K} - \frac{l^2 \pi}{2 K} \right)^{\alpha} \leq C_2 l^{\alpha} K^{-\alpha}
\]
with $C_2 = C \left( \frac{3\pi}{2} \right)^{\alpha}$

Additionally, for $w \in A_l \cup B_l$, one has $\left| w^K - 1 \right| \leq 2$ and
\[
 |w-1| \geq \sin \left( \frac{l^2 \pi}{2K} \right) \geq \frac{l^2}{K}.
\]
Thus, if $\alpha < 1$, we have
\begin{align*}
\int_{S_3} \! \left| \frac{w^K - 1}{w - 1} \right| \, \mathrm d \mu(w)
       & \leq \sum_{l=1}^{ \left\lfloor \sqrt{K} \right\rfloor } \int_{A_l} \! \left| \frac{w^K - 1}{w - 1} \right| \, \mathrm d \mu(w)  + \sum_{l=1}^{\left\lfloor \sqrt{K} \right\rfloor} \int_{B_l} \! \left| \frac{w^K - 1}{w - 1} \right| \, \mathrm d \mu(w)\\
       & \leq \sum_{l = 1}^{\left\lfloor \sqrt{K} \right\rfloor} \frac{4K}{l^2} \cdot C_2 l^{\alpha} K^{-\alpha}  \\
       & \leq 4 C_2 K^{1 - \alpha} \sum_{l=1}^{\infty} l^{\alpha - 2} \\
       & \leq C_3 K^{1-\alpha}
\end{align*}
with $C_3 = 4 C_2 \sum_{l=1}^{\infty} l^{\alpha - 2}$ (note that the series converges because $\alpha < 1$).  Combining the three estimates for $S_1, S_2,$ and $S_3$ gives us the desired bound.
\end{proof}

\begin{remark}
It is relatively easy to see that the proof of Lemma \ref{UaH.integral.estimate} yields an upper bound of constant times $\log(K)$ in the case when $\alpha = 1$.  Moreover, it is well-known and not hard to verify that this upper bound is optimal when $\mu$ is Lebesgue measure on $\partial \D$.  However, the factor of $\log(K)$ would not be optimal in the following lemma, which is why a separate argument is necessary therein.
\end{remark}

Lemma~\ref{UaH.integral.estimate} gives estimates for Fourier coefficients on the unit circle very similar to those in Strichartz' theorem \cite{S}.

\begin{lemma} \label{strichartz.estimates}
Suppose $\mu$ is a U$\alpha$H measure on $\partial \D$ for some $0 \leq \alpha \leq 1$. For each $f \in L^2(\partial \D, \mathrm d \mu)$, $k \in \Z$, define
\[ \widehat{f \mu}(k) = \int_{\partial \D} \! z^{-k} f(z) \, \mathrm d \mu(z). \]
Then, there exists $C>0$ such that for all $f \in L^2(\partial \D, \mathrm d \mu)$ and $K>0$, we have
\[
\left\langle \left| \widehat{f\mu} \right|^2 \right\rangle(K) < C \|f\|_{L^2(\mu)}^2 K^{-\alpha}.
\]
\end{lemma}

\begin{proof}
First, suppose $\alpha < 1$.  A straightforward calculation reveals
\begin{align*}
\left\langle \left| \widehat{f\mu} \right|^2 \right\rangle(K) & = \frac{1}{K} \sum_{j=0}^{K-1} \left| \widehat{f \mu}(j) \right|^2 \\
                      & = \frac{1}{K} \sum_{j=0}^{K-1}  \int_{\partial \D} \int_{\partial \D} \! \overline{z}^j w^j f(z) \overline{f(w)} \, \mathrm d \mu(z) \, \mathrm d \mu(w) \\
                      & = \frac{1}{K} \int_{\partial \D} \int_{\partial \D} \! \frac{\overline{z}^K w^K - 1}{\overline{z} w - 1 } f(z) \overline{f(w)} \, \mathrm d \mu(z) \, \mathrm d \mu(w).
\end{align*}
By using the elementary inequality $|ab| \leq \frac{1}{2} |a|^2 + \frac{1}{2} |b|^2$ and Fubini's Theorem, we see that this is in turn bounded above by
$$
                      \frac{1}{K}  \int_{\partial \D} \int_{\partial \D} \! \left| \frac{\overline{z}^K w^K - 1}{\overline{z} w - 1 } \right| \left|f(z) \right|^2  \, \mathrm d \mu(z) \, \mathrm d \mu(w).
$$
Now, we integrate with respect to $w$ and apply the previous lemma to see that this is less than or equal to
$$
                      \frac{1}{K} \gamma K^{1-\alpha}  \int_{\partial \D} \! \left| f(z) \right|^2 \, \mathrm d \mu(z)
                       = \gamma K^{-\alpha} \|f\|^2.
  $$

The case $\alpha = 1$ is essentially elementary.  Suppose $\mu$ is U1H, and let $\lambda$ denote (normalized) Lebesgue measure on $\partial \D$. Evidently, $\mu$ is absolutely continuous with respect to $\lambda$. Moreover, by the Lebesgue Differentiation Theorem, $g = \frac{d\mu}{d \lambda} $ is in $L^\infty(\lambda)$.  Notice that $f\sqrt{g} \in L^2(\lambda)$  and, evidently, $ \left\| f \sqrt{g} \right\|_{L^2(\lambda)} = \| f \|_{L^2(\mu)} $.  Thus, by Plancherel, we have
\begin{align*}
\left\langle \left| \widehat{f\mu} \right|^2 \right\rangle(K)
& =
\frac{1}{K} \sum_{j=0}^{K-1} \left| \widehat{f\mu}(j) \right|^2 \\
& =
\frac{1}{K} \sum_{j=0}^{K-1} \left| \widehat{fg}(j) \right|^2 \\
& \leq
\frac{1}{K} \left\| f g \right\|_{L^2(\lambda)}^2 \\
& \leq
\frac{\| g \|_\infty}{K} \left\| f \sqrt{g} \right\|_{L^2(\lambda)}^2 \\
& =
\frac{\| g \|_\infty}{K} \|f\|_{L^2(\mu)}^2.
\end{align*}
Notice that $\widehat{\cdot}$ has two different meanings in the above argument.  In the first line, it is as defined in the statement of the lemma, while, in the second line, it denotes the usual Fourier transform $L^2(\partial \D, \lambda) \to \ell^2(\Z)$.
\end{proof}

\begin{prop} \label{UaH}
Let $\Hi$, $U$, $\psi$, and $(\varphi_n)_{n \in A}$ with $ A = \Z_+^d$ or $A = \Z^d$.  If the spectral measure $\mu_{\psi}$ is U$\alpha$H for some $0 \leq \alpha \leq 1$,  then there is a uniform constant $C_0 > 0$ such that the following hold for all $N,K \geq 1$:
\[
\tilde{P}_{\inn}(N,K) \leq C_0 N^d K^{-\alpha}.
\]
As a consequence, for each $p > 0$, there exists a constant $C_p > 0$ such that the following holds for all $K$:
\[
\left\langle |X|_{\psi}^p \right\rangle(K) \geq C_p K^{\frac{p\alpha}{d}}.
 \]
\end{prop}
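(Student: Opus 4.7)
The plan is to reduce the upper bound on $\tilde{P}_{\inn}^\psi$ to Lemma~\ref{strichartz.estimates} via the spectral theorem, and then pass to the moment lower bound through Remark~\ref{r.moments.pout} by choosing the truncation radius so that $\tilde{P}_{\inn}^\psi$ is bounded by $\tfrac12$.

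The first and main step is a spectral-theoretic representation of the matrix coefficients $\langle\varphi_n, U^k\psi\rangle$. Let $P$ denote the orthogonal projection onto the closed $U$-cyclic subspace of $\psi$; the spectral theorem supplies a unitary $V$ from that subspace onto $L^2(\partial\D,d\mu_\psi)$ with $V\psi = 1$ and $VUV^{-1}$ equal to multiplication by $z$. Setting $g_n = V(P\varphi_n)$, we have $\|g_n\|_{L^2(\mu_\psi)} = \|P\varphi_n\|_\Hi \leq 1$ and
\[
\langle\varphi_n, U^k\psi\rangle = \langle P\varphi_n, U^k\psi\rangle_\Hi = \langle g_n, z^k\rangle_{L^2(\mu_\psi)} = \overline{\widehat{g_n\mu_\psi}(-k)}.
\]
Thus $a_\psi(n,k) = |\widehat{g_n\mu_\psi}(-k)|^2$. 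Because $\mu_\psi$ is a positive measure and $\bar z = z^{-1}$ on $\partial\D$, a direct computation gives $|\widehat{g_n\mu_\psi}(-k)| = |\widehat{\bar g_n\mu_\psi}(k)|$, so applying Lemma~\ref{strichartz.estimates} to $\bar g_n$ yields
\[
\tilde{a}_\psi(n,K) \leq \gamma \|g_n\|_{L^2(\mu_\psi)}^2 K^{-\alpha} \leq \gamma K^{-\alpha}
\]
when $\alpha < 1$, and the analogous bound with $\log(K)/K$ in place of $K^{-\alpha}$ when $\alpha = 1$, with $\gamma$ independent of $n$.

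Summing over the $O(N^d)$ indices $n \in A$ with $|n|\leq N$ (valid whether $A = \Z^d$ or $A = \Z_+^d$) immediately produces the asserted bound $\tilde{P}_{\inn}^\psi(N,K) \leq C_0 N^d K^{-\alpha}$, with the stated logarithmic modification when $\alpha = 1$. For the moment lower bound, I would choose the truncation radius $N = N(K)$ so that the preceding estimate forces $\tilde{P}_{\inn}^\psi(N,K) \leq \tfrac12$; this requires $N \sim K^{\alpha/d}$ (respectively $N \sim (K/\log K)^{1/d}$). Then $\tilde{P}_{\out}^\psi(N,K) \geq \tfrac12$, and Remark~\ref{r.moments.pout} gives
\[
\langle |X|_\psi^p\rangle(K) \geq N^p \tilde{P}_{\out}^\psi(N,K) \geq \tfrac12 N^p,
\]
which is the asserted lower bound after absorbing constants into $C_p$.

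The only real content lies in the spectral identification: once the matrix coefficients $\langle\varphi_n, U^k\psi\rangle$ are displayed as Fourier coefficients of measures of the form $h\mu_\psi$ with $\|h\|_{L^2(\mu_\psi)}\leq 1$, everything that remains is a direct appeal to Lemma~\ref{strichartz.estimates}, a crude counting of lattice points, and a one-line optimization of the truncation radius via Remark~\ref{r.moments.pout}.
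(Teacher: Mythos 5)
Your proposal is correct and follows essentially the same route as the paper's proof: the spectral identification of $a_\psi(n,k)$ as a squared Fourier coefficient $|\widehat{g_n\mu_\psi}(\cdot)|^2$ with $\|g_n\|_{L^2(\mu_\psi)}\le 1$, then Lemma~\ref{strichartz.estimates}, the lattice count $\#\{|n|\le N\}\sim N^d$, and the choice $N\sim K^{\alpha/d}$ fed into Remark~\ref{r.moments.pout}. One tiny algebraic slip: with the paper's convention $\widehat{f\mu}(k)=\int z^{-k}f\,d\mu$, one has $\langle g_n,z^k\rangle_{L^2(\mu_\psi)} = \int \overline{g_n(z)}\,z^k\,d\mu_\psi(z) = \overline{\widehat{g_n\mu_\psi}(k)}$, not $\overline{\widehat{g_n\mu_\psi}(-k)}$; thus $a_\psi(n,k)=|\widehat{g_n\mu_\psi}(k)|^2$ directly and the subsequent detour through $\bar g_n$ and the index $-k$ is unnecessary (the two errors cancel, so the final estimate is unaffected).
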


\begin{proof}
 Let $\Hi_{\psi}$ denote the cyclic subspace spanned by $U$ and $\psi$, with the corresponding orthogonal projection $P_{\psi}: \Hi \to \Hi_{\psi}$. Next, let $V: \Hi_{\psi} \to L^2(\partial \D, \mathrm d \mu_{\psi}(z))$ denote the natural unitary equivalence sending $f(U) \psi$ to $f(z)$.  Put $u_{\psi}^n = VP_{\psi} \varphi_n$. We may observe that
$$
a(n,k) = \left| \widehat{u_\psi^n \mu_\psi}(k) \right|^2
$$
by Fubini's theorem and the spectral theorem.  Hence, we obtain
\begin{align*}
\tilde{P}_{\inn}(N,K) & = \frac{1}{K} \sum_{k=0}^{K-1} \sum_{|n| \leq N} a(n,k) \\
                      & = \sum_{|n| \leq N} \left\langle \left| \widehat{u_{\psi}^n \mu_{\psi}} \right|^2 \right\rangle(K) \\
                      & \leq \sum_{|n| \leq N} C K^{-\alpha} \|u_{\psi}^n\|^2_{L^2(\partial \D,\mathrm d \mu_{\psi})} \\
                      & \leq C_0 K^{-\alpha} N^d.
\end{align*}
The third line follows from Lemma \ref{strichartz.estimates}, and the final line follows from the observations $\|u_{\psi}^n\|_{L^2(\partial \D,\mathrm d \mu_{\psi})} = \|P_{\psi} \varphi_n\| \leq \| \varphi_n \| = 1$ and $\#\{n : |n| \leq N\} \sim N^d$.

This implies that
\[
\tilde{P}_{\inn}  \left(
\left( \frac{K^{\alpha}}{2C_0 } \right)^{1/d}, K \right) \leq \frac{1}{2}.
\]
Equivalently,
\[
\tilde{P}_{\out}  \left(
\left( \frac{K^{\alpha}}{2C_0 } \right)^{1/d}, K \right) \geq \frac{1}{2}.
\]
As a consequence of Remark~\ref{r.moments.pout}, we then obtain the estimate
\[
\left\langle |X|^p_{\psi} \right\rangle(K) \geq \left( \frac{K^{\alpha}}{2C_0 } \right)^{p/d} \cdot \frac{1}{2}
\]
With $C_p = \frac{1}{2 (2C_0)^{p/d}} $, we obtain the desired lower bound.
\end{proof}

We can use the previous proposition to prove a reformulation of Theorem 3.2 of \cite{L} in the present context:

\begin{prop} \label{rage}
Suppose that $\mu_{\psi}$ is U$\alpha$H for some $0 \leq \alpha \leq 1$. There then exists a constant $C = C_{\psi}$  such that the following holds for any compact operator $A$, any $p \in \N$, and any $K > 0$:
\[
\frac{1}{K} \sum_{j=0}^{K-1} |\langle \psi(k), A\psi(k) \rangle | < C_{\psi}^{1/p} \| A \|_{p}  K^{-\alpha/p}.
\]
The expression $\| A \|_p$ denotes the $p$th trace norm of $A$, that is,
\[
\| A \|_p = \left( \tr \left( |A|^p \right) \right)^{1/p}.
\]
We allow the possibility that $\| A \|_p = \infty$, in which case the conclusion of the theorem is trivial.
\end{prop}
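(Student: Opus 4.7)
The strategy is to establish the bound first in the trace-class case ($p=1$) and then extend to all $p \in \N$ by interpolation between the Schatten classes $S_1$ and $S_\infty$.

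For the $p=1$ case, I would reuse the cyclic-subspace setup from the proof of Proposition \ref{UaH}. For any $\phi \in \Hi$, the vector $u_\psi^\phi = V P_\psi \phi$ identifies $|\langle\phi, \psi(k)\rangle|^2$ with $|\widehat{u_\psi^\phi \mu_\psi}(k)|^2$, so Lemma \ref{strichartz.estimates} yields $\langle |\langle\phi, \psi(k)\rangle|^2 \rangle(K) \leq \gamma \|\phi\|^2 K^{-\alpha}$ (using $\|u_\psi^\phi\|_{L^2(\mu_\psi)} \leq \|\phi\|$). For a rank-one operator $A = |\phi\rangle\langle\chi|$, the factorization $\langle\psi(k), A\psi(k)\rangle = \langle\psi(k),\phi\rangle\langle\chi,\psi(k)\rangle$ combined with Cauchy--Schwarz on the Ces\`aro sum gives $\langle|\langle\psi(k), A\psi(k)\rangle|\rangle(K) \leq \gamma K^{-\alpha}\|\phi\|\|\chi\| = \gamma K^{-\alpha}\|A\|_1$. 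Extending this to a general trace-class operator by its singular-value decomposition $A = \sum_j s_j |g_j\rangle\langle f_j|$ and the triangle inequality produces $\langle|\langle\psi(k), A\psi(k)\rangle|\rangle(K) \leq \gamma K^{-\alpha}\|A\|_1$.

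For general $p \in \N$, consider the linear map $T_K \colon A \mapsto (\langle\psi(k), A\psi(k)\rangle)_{k=0}^{K-1}$ from compact operators to $\C^K$ equipped with the normalized norm $\|x\|_{\ell^1} = \frac{1}{K}\sum_k |x_k|$. The previous step shows $\|T_K\|_{S_1 \to \ell^1} \leq \gamma K^{-\alpha}$, while the trivial estimate $|\langle\psi(k), A\psi(k)\rangle| \leq \|A\|_\infty$ yields $\|T_K\|_{S_\infty \to \ell^1} \leq 1$. Complex interpolation for the Schatten scale (the isometric identification $[S_1, S_\infty]_{1-1/p} = S_p$) then gives $\|T_K\|_{S_p \to \ell^1} \leq \gamma^{1/p} K^{-\alpha/p}$, which is precisely the statement with $C_\psi = \gamma$. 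The $\alpha = 1$ case is handled identically, with $\gamma \log(K)/K$ in place of $\gamma K^{-\alpha}$.

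The main obstacle is invoking the Schatten-class interpolation cleanly. If one prefers to stay elementary, a truncation argument works instead: split $A = A_1 + A_2$ at a singular-value threshold $\Lambda$, apply the trace-class bound to $A_1$ (using $\|A_1\|_1 = \sum_{s_j > \Lambda} s_j \leq \Lambda^{1-p}\|A\|_p^p$, a Chebyshev-type inequality on the singular values), use $\|A_2\|_\infty \leq \Lambda$ for the remainder, and optimize over $\Lambda$. This recovers the correct decay $K^{-\alpha/p}$, and any $p$-dependent factor that appears can be absorbed into $C_\psi$.
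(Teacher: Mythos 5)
Your argument is correct, and it reaches the stated bound by a route genuinely different from the paper's. The paper follows Last's original approach: it takes the singular-value decomposition $A\phi = \sum_n s_n \langle x_n, \phi\rangle y_n$, bounds $|\langle\psi(k),A\psi(k)\rangle| \le \sum_n s_n |\langle x_n,\psi(k)\rangle||\langle y_n,\psi(k)\rangle|$, then applies Cauchy--Schwarz over $k$, H\"older over $n$ with exponents $(p,q)$, and Cauchy--Schwarz over $n$ once more, finishing with the estimate $\sum_n (\tfrac{1}{K}\sum_k|\langle x_n,\psi(k)\rangle|^2)^q \le (C_\psi K^{-\alpha})^{q-1}$ which uses $\sum_n |\langle x_n,\psi(k)\rangle|^2 = 1$. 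This handles all $p$ at once and yields exactly $C_\psi^{1/p}$. Your route instead establishes the two endpoints — $p=1$ via the identical Cauchy--Schwarz-over-$k$ computation on rank-one pieces, and $p=\infty$ trivially via $\|\psi(k)\|=1$ — and then interpolates, either by invoking the isometric Schatten interpolation $[S_1,S_\infty]_{1-1/p} = S_p$ (which gives precisely $\gamma^{1/p}K^{-\alpha/p}$, matching the paper's constant) or by the more elementary singular-value truncation. The interpolation version is arguably more conceptual and makes transparent that the paper's chain of inequalities is a hand-rolled Riesz--Thorin argument; its only cost is reliance on the Schatten-class interpolation theorem as a black box. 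The truncation fallback is fully elementary and correct, though, as you note, it yields a $p$-dependent prefactor roughly $p(p-1)^{(1-p)/p}$ rather than a clean $C_\psi^{1/p}$; since the paper only applies this proposition with $p=1$ (in Proposition~\ref{ac.not.zero}, taking $A = P_N$), this looseness is immaterial for the paper's purposes, and for any fixed $p$ the dependence can indeed be absorbed into the constant.
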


\begin{proof}
The proof is essentially identical to that given by Last for the self-adjoint case in \cite{L}.  We provide the details for the case $ p>1$ for the convenience of the reader.  The result when $p = 1$ is significantly easier.
\newline

As before, let $P_{\psi}: \Hi \to \Hi_{\psi}$ denote the orthogonal projection onto the cyclic subspace spanned by $U$ and $\psi$, and $V : \Hi_{\psi} \to L^2(\partial \D, \mathrm d \mu_{\psi})$ the standard unitary equivalence.  Given $\phi \in \Hi$ with $\| \phi \| = 1$, put $f_{\phi} = V P_{\psi} \phi$.  Evidently, $\|f_{\phi}\|_{L^2(\partial \D,\mathrm d\mu_{\psi})} \leq 1$.  We may observe that
\begin{align*}
|\langle \phi, \psi(k) \rangle|
  & = |\langle \phi, U^k \psi \rangle| \\
  & = |\langle f_{\phi}(z), z^k \rangle_{L^2(\partial \D, \mathrm d \mu_{\psi})}| \\
  & = \left| \widehat{f_{\phi} \mu_{\psi}}(k) \right|.
\end{align*}
In particular, Lemma~\ref{strichartz.estimates} implies that there exists a constant $C_{\psi}$ (which does not depend on $\phi$)  such that
\begin{equation} \label{e.expect.ub}
 \left\langle | \langle \phi, \psi(\cdot) \rangle |^2 \right\rangle(K) \leq C_{\psi} K^{-\alpha}.
\end{equation}
By the singular value decomposition, there exist real numbers $s_n \geq 0$ and orthonormal bases $(x_n)$ and $(y_n)$ of $\Hi$ such that $A$ can be written as
\[ A\phi = \sum_{n} s_n \langle x_n, \phi \rangle y_n. \]
Moreover, it is well known that $\|A\|_p = \left( \sum_n s_n^p \right)^{1/p}$. Let $q \in (1, \infty )$ denote the exponent conjugate to $p$, so that $\frac{1}{p} + \frac{1}{q} =1$.  We may observe that, for each $k$, one has
\begin{align*}
| \langle \psi(k) , A \psi(k) \rangle  |
   & = \left| \left\langle \psi(k), \sum_n s_n \langle x_n, \psi(k) \rangle y_n   \right\rangle \right| \\
   & \leq \sum_n s_n |\langle x_n, \psi(k) \rangle \langle y_n, \psi(k) \rangle |.
\end{align*}

Thus, we obtain
$$
\frac{1}{K} \sum_{k=0}^{K-1} |\langle \psi(k), A\psi(k) \rangle |
    \leq \frac{1}{K} \sum_n s_n \sum_{k=0}^{K-1} |\langle x_n, \psi(k) \rangle \langle y_n, \psi(k) \rangle |.
$$
Applying Cauchy-Schwarz to the summation over $k$, we see that the expression on the right hand side is bounded above by
$$
    \frac{1}{K} \sum_n s_n \left(     \sum_{k=0}^{K-1} |\langle x_n, \psi(k) \rangle|^2 \right)^{1/2}
 \left( \sum_{k=0}^{K-1} |\langle y_n, \psi(k) \rangle| ^2 \right)^{1/2}.
$$
Applying H\"older's inequality to the summation over $n$, this is in turn bounded above by
$$
 \frac{1}{K} \left( \sum_n s_n^p \right) ^{1/p} \left( \sum_n \left(     \sum_{k=0}^{K-1} |\langle x_n, \psi(k) \rangle|^2 \right)^{q/2}
 \left( \sum_{k=0}^{K-1} |\langle y_n, \psi(k) \rangle| ^2 \right)^{q/2} \right)^{1/q}.
 $$
 Using Cauchy-Schwarz one last time on the right-hand summation in $n$ and the definition of the $p$th trace norm, this expression is less than or equal to
 $$
 \|A\|_p \left( \left( \sum_n \left( \frac{1}{K} \sum_{k=0}^{K-1}|\langle x_n,\psi(k) \rangle|^2 \right)^q \right) \left( \sum_n \left( \frac{1}{K} \sum_{k=0}^{K-1}|\langle y_n,\psi(k) \rangle|^2 \right)^q \right) \right)^{\frac{1}{2q}}.
 $$
Since $(x_n)$ and $(y_n)$ are orthonormal bases of $\Hi$, unitarity of $U$  and \eqref{e.expect.ub} imply that the expression above is in turn bounded above by
$$
    \|A\|_p  \left( \left( C_{\psi} K^{-\alpha} \right)^{q-1} \left( C_{\psi} K^{-\alpha} \right)^{q-1}  \right)^{\frac{1}{2q}}
    =  \|A\|_p C_{\psi}^{1/p} K^{-\alpha/p},
$$
where we have used $p^{-1} + q^{-1} = 1$.

\end{proof}

\subsection{Spectral Measures with a Non-Trivial $\alpha$-Continuous Component}

The results above can be strengthened further.  We briefly review the definition of $\alpha$-dimensional Hausdorff measure.

\begin{defi}
Fix $\alpha \geq 0$, and let $E \subseteq \partial \D$.  Given $\delta >0$, by a $\delta$-cover of $E$, we shall mean a {\rm (}countable{\rm )} collection of subsets $S_1,S_2,\ldots  \subseteq \partial \D$ which satisfies $\textup{diam}(S_n) < \delta$ for all $n$ and $E \subseteq \bigcup_{n=1}^{\infty} S_n$.  The collection of all $\delta$-covers of $E$ will be denoted $\mathcal I_{\delta}(E)$.  The $\alpha$-dimensional Hausdorff measure of $E$ is then defined by
\[
h^{\alpha}(E) = \lim_{\delta \to 0^+} \, \inf_{(S_n) \in \mathcal I_{\delta}(E) } \sum_{n=1}^{\infty} \left( \textup{diam}(S_n) \right)^{\alpha}.
\]
\end{defi}

Note that the infimum of such sums over $\delta$-covers is monotone in $\delta$ so that the indicated limit indeed exists.

The Hausdorff dimension of a non-empty subset $S$ of $\partial \D$ is given by
\begin{align*}
\dim_{\Hd}(S) & = \sup \{ \alpha : h^\alpha(S) > 0 \} = \sup \{ \alpha : h^\alpha(S) = \infty \} \\ & = \inf \{ \alpha : h^\alpha(S) < \infty \} = \inf \{ \alpha : h^\alpha(S) = 0 \}.
\end{align*}

We shall say that a measure $\mu$ on $\partial \D$ is $\alpha$-continuous ($\alpha$c) if $\mu(E) = 0$ for all sets $E \subseteq \partial \D$ having $h^{\alpha}(E) = 0$.  One can easily check that a U$\alpha$H measure on $\partial \D$ must necessarily be $\alpha$-continuous.  The converse need not hold in general, but we can adapt a theorem of Rogers and Taylor for measures on $\R$ to see that an $\alpha$-continuous measure on $\partial \D$ is ``almost'' a U$\alpha$H measure.  The precise formulation follows.

\begin{lemma} \label{rogers.taylor}
Suppose $\mu$ is a finite $\alpha$-continuous measure on $\partial \D$.  Then, for each $\epsilon >0$, there exist mutually singular Borel measures $\mu_1^{\epsilon}$ and $\mu_2^{\epsilon}$ on $\partial \D$ such that $\mu = \mu_1^{\epsilon} + \mu_2^{\epsilon}$, $\mu_1^{\epsilon}$ is U$\alpha$H, and $\mu_2^{\epsilon}(\partial \D) < \epsilon$.
\end{lemma}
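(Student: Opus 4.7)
The plan is to adapt the Rogers--Taylor decomposition argument from $\R$ to $\partial \D$, via the upper $\alpha$-derivative
\[ D^\alpha_\mu(z) = \limsup_{\delta \to 0^+} \frac{\mu(I_\delta(z))}{(2\delta)^\alpha}, \]
where $I_\delta(z)$ denotes the arc of length $2\delta$ centered at $z$. The first step is to show that $\alpha$-continuity forces $D^\alpha_\mu$ to be finite $\mu$-almost everywhere. For each $K > 0$ and each $z$ in $A_K := \{D^\alpha_\mu > K\}$, there exist arcs of arbitrarily small radius $r$ centered at $z$ with $\mu(I_r(z)) > K(2r)^\alpha$. Applying a standard $5r$-covering lemma on $\partial \D$ to such a family, I would extract a disjoint subfamily $\{I_{r_i}(z_i)\}$ whose five-fold enlargements cover $A_K$; since the $I_{r_i}(z_i)$ are disjoint and each carries $\mu$-mass exceeding $K(2r_i)^\alpha$, this bounds $\sum_i (10 r_i)^\alpha$ by $10^\alpha K^{-1}\mu(\partial \D)$. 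Letting the admissible radii shrink yields $h^\alpha(A_K) \leq 10^\alpha K^{-1}\mu(\partial \D)$, so $h^\alpha(\{D^\alpha_\mu = \infty\}) = 0$; by $\alpha$-continuity, $\mu(\{D^\alpha_\mu = \infty\}) = 0$.

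The second step converts this pointwise finiteness into a uniform bound on a large set. For positive integers $N, m$, define
\[ E_{N,m} = \bigl\{ z \in \partial \D : \mu(I_\delta(z)) \leq N(2\delta)^\alpha \text{ for all } \delta \in (0, 1/m] \bigr\}. \]
These sets are Borel and monotone increasing in $(N,m)$, and their union contains $\{D^\alpha_\mu < \infty\}$, hence has full $\mu$-measure. Continuity of $\mu$ from below lets me pick $N$ and $m$ so that $\mu(\partial \D \setminus E_{N,m}) < \epsilon$. I would then set $\mu_1^\epsilon = \mu|_{E_{N,m}}$ and $\mu_2^\epsilon = \mu|_{\partial \D \setminus E_{N,m}}$; these are concentrated on complementary Borel sets (so are mutually singular), add up to $\mu$, and satisfy $\mu_2^\epsilon(\partial \D) < \epsilon$ by construction.

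It remains to check that $\mu_1^\epsilon$ is U$\alpha$H. For an arc $I$ with $|I| \leq 1/m$ that meets $E_{N,m}$ at some point $z$, the inclusion $I \subseteq I_{|I|}(z)$ forces $\mu_1^\epsilon(I) \leq \mu(I_{|I|}(z)) \leq N (2|I|)^\alpha$; if $I$ misses $E_{N,m}$ the restriction gives $\mu_1^\epsilon(I) = 0$. For arcs of length $|I| > 1/m$ one simply estimates $\mu_1^\epsilon(I) \leq \mu(\partial \D) \leq \mu(\partial \D) m^\alpha |I|^\alpha$. Taking the larger of the two prefactors yields a single constant $C = C(N, m, \mu)$ witnessing the U$\alpha$H property. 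The one delicate ingredient in the whole argument is the covering step in paragraph one: one needs a $5r$-covering lemma on $\partial \D$, which is a routine adaptation of the Euclidean version (the bounded-radii hypothesis is automatic by compactness of $\partial \D$). Everything else is bookkeeping transferred from the real-line formulation of Rogers--Taylor.
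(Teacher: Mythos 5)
Your argument is correct, but it takes a genuinely different route from the paper's. The paper's proof is a one-line transfer: push $\mu$ forward to $[0,2\pi)\subset\R$ via $T\colon e^{i\theta}\mapsto\theta$, invoke the Rogers--Taylor decomposition for measures on $\R$ as a black box, and pull the two pieces back to $\partial\D$. You instead re-derive the Rogers--Taylor decomposition directly on the circle: a Vitali-type $5r$-covering argument shows $h^\alpha(\{D^\alpha_\mu=\infty\})=0$, whence $D^\alpha_\mu<\infty$ $\mu$-a.e.\ by $\alpha$-continuity; the sets $E_{N,m}$ then uniformize the pointwise bound; and restriction of $\mu$ to $E_{N,m}$ gives the U$\alpha$H piece once you observe that an arc $I$ meeting $E_{N,m}$ at $z$ sits inside $I_{|I|}(z)$, together with the trivial bound for long arcs. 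The paper's route is shorter and leans on the cited literature; yours is self-contained and has the side benefit of making the mechanism explicit (in particular it makes transparent why the U$\alpha$H constant blows up as $\epsilon\downarrow 0$, via the growth of $N$ and $m$). Two cosmetic remarks, neither a gap: the constant in the Hausdorff-measure estimate should be $5^\alpha K^{-1}\mu(\partial\D)$ rather than $10^\alpha K^{-1}\mu(\partial\D)$ (since $(10r_i)^\alpha=5^\alpha(2r_i)^\alpha$), which of course doesn't affect the conclusion; and the paper's definition of U$\alpha$H uses a strict inequality $\mu(I)<C|I|^\alpha$, so you should simply enlarge your final constant by a unit to match.
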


\begin{proof}
Let $T: e^{i\theta} \mapsto \theta$ be the usual map from $\partial \D$ to $[0,2\pi)$.  Evidently, $\nu := T_*\mu$ is an $\alpha$-continuous measure on $[0,2\pi) \subset \R$, and hence, we may invoke the result of Rogers and Taylor for measures on $\R$ \cite{RT1, RT2} to produce mutually singular Borel measures $\nu_1^{\epsilon},\nu_2^{\epsilon}$ on $[0,2\pi)$ such that $\nu = \nu_1^{\epsilon} + \nu_2^{\epsilon}$, $\nu_1^{\epsilon}$ is U$\alpha$H, and $\nu_2^{\epsilon}([0,2\pi)) < \epsilon$.  Some slight untangling shows that $\mu_1^{\epsilon} := \left(T^{-1} \right)_* \nu_1^{\epsilon}$ and $\mu_2^{\epsilon} := \left(T^{-1} \right)_* \nu_2^{\epsilon}$ are the desired measures on $\partial \D$.
\end{proof}

We shall say that $\mu$ is $\alpha$-singular if it is supported on a set having zero $\alpha$-dimensional Hausdorff measure.  This leads to a natural decomposition of our Hilbert space, $\Hi = \Hi_{\alpha c} \oplus \Hi_{\alpha s}$, where $\Hi_{\bullet} = \{ \psi \in \Hi : \mu_{\psi} \textup{ is }  \bullet \}$.  One can check that these are closed, mutually orthogonal subspaces of $\Hi$.  As usual, let us denote by $P_{\bullet}$ the orthogonal projection onto $\Hi_{\bullet}$.

Theorem~6.1 of \cite{L} generalizes to the present context.

\begin{prop} \label{ac.not.zero}
Suppose that $P_{\alpha c}\psi \neq 0$ for some $\alpha \in [0,1]$.    Choose $d$ as in Proposition \ref{UaH}. Then, for each $p > 0$, there exists a constant $C = C_{\psi,p}$ such that for every $K > 0$, one has
\[
\big\langle  |X|^p_{\psi}  \big\rangle(K) > C_{\psi,p} K^{\alpha p/d}.
\]
\end{prop}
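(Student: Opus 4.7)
The plan is to reduce to the U$\alpha$H setting of Proposition~\ref{UaH} via the Rogers--Taylor approximation (Lemma~\ref{rogers.taylor}). Since $P_{\alpha c}\psi \neq 0$, we have the orthogonal decomposition $\psi = \psi_{\alpha c} + \psi_{\alpha s}$ with $\mu_{\psi_{\alpha c}}$ a nonzero $\alpha$-continuous measure. For each $\epsilon > 0$, Lemma~\ref{rogers.taylor} decomposes $\mu_{\psi_{\alpha c}} = \nu_1^\epsilon + \nu_2^\epsilon$ into mutually singular parts with $\nu_1^\epsilon$ U$\alpha$H and $\nu_2^\epsilon(\partial \D) < \epsilon$. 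Choosing disjoint Borel supports $E_1, E_2 \subseteq \partial \D$, the vectors $\phi_i := \chi_{E_i}(U)\psi_{\alpha c}$ give $\psi_{\alpha c} = \phi_1 + \phi_2$ orthogonally, with $\mu_{\phi_i} = \nu_i^\epsilon$ and $\|\phi_2\|^2 < \epsilon$. The normalized vector $\phi_1/\|\phi_1\|$ has U$\alpha$H spectral measure, so Proposition~\ref{UaH} applies and, by homogeneity,
\[
\langle |X|^p_{\phi_1} \rangle(K) \geq \|\phi_1\|^2 C_p K^{\alpha p/d}
\]
(with the usual $\log$-modification at $\alpha = 1$). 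Fixing $\epsilon < \|\psi_{\alpha c}\|^2/2$ forces $\|\phi_1\|^2 \geq \|\psi_{\alpha c}\|^2/2 > 0$.

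Writing $\psi = \phi_1 + \rho$ with $\rho := \phi_2 + \psi_{\alpha s}$ orthogonal to $\phi_1$, and setting $T\varphi_n := (|n|^p+1)\varphi_n$, we expand
\[
|X|^p_\psi(k) = |X|^p_{\phi_1}(k) + |X|^p_\rho(k) + 2\Re\langle U^k\phi_1, T U^k\rho \rangle,
\]
so that time-averaging yields
\[
\langle |X|^p_\psi \rangle(K) \geq \langle |X|^p_{\phi_1} \rangle(K) - 2\left| \frac{1}{K}\sum_{k=0}^{K-1}\langle U^k\phi_1, T U^k\rho\rangle \right|.
\]
The task is then to show that the Cesaro-averaged cross term is dominated by $\langle |X|^p_{\phi_1}\rangle(K)$, so that a choice of small $c$ in $N = cK^{\alpha/d}$ (combined with Remark~\ref{r.moments.pout}) delivers the advertised lower bound with a $\psi$-dependent constant.

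The main obstacle is precisely this cross-term estimate. A direct Cauchy--Schwarz only yields $|\mathrm{cross}(K)| \leq \sqrt{\langle |X|^p_{\phi_1}\rangle(K) \langle |X|^p_\rho \rangle(K)}$, which is too crude, since $\langle |X|^p_\rho \rangle(K)$ may be comparable to $\langle |X|^p_{\phi_1}\rangle(K)$ when $\psi_{\alpha s}$ is large. To obtain genuine cancellation one exploits the mutual singularity of $\mu_{\phi_1}$ (supported on $E_1$) and $\mu_\rho$ (supported on $\partial \D \setminus E_1$): rewriting the cross term in the spectral representation of the cyclic subspace of $\psi$ expresses it, for each $n$, as an integral of the Fej\'er-type kernel $\bigl[(w/z)^K - 1\bigr] / \bigl[K(w/z - 1)\bigr]$ against $\mu_\psi \otimes \mu_\psi$ restricted to $E_1 \times (\partial \D \setminus E_1)$; on this disjoint product $w \neq z$, so the kernel is $O(1/(K|w - z|))$. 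Quantifying this decay by combining the U$\alpha$H regularity of $\nu_1^\epsilon$ with the Strichartz-type control of Lemma~\ref{strichartz.estimates} (applied in a polarized form) is the technical heart of the argument and furnishes a cross-term bound that is small compared to $\langle |X|^p_{\phi_1} \rangle(K)$. Assembling the pieces and choosing $\epsilon$ and $N$ appropriately yields $\langle |X|^p_\psi \rangle(K) \geq C_{\psi,p} K^{\alpha p/d}$, with the $\log$-variant handled analogously at $\alpha = 1$.
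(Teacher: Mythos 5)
Your setup via Lemma~\ref{rogers.taylor} matches the paper's, but the route you take from there has a genuine gap, which you correctly flag but do not close. Expanding $|X|^p_\psi(k) = |X|^p_{\phi_1}(k) + |X|^p_\rho(k) + 2\Re\langle U^k\phi_1, T U^k\rho\rangle$ forces you to control a cross term whose time-average is not obviously small: the kernel bound $O(1/(K|w-z|))$ does not integrate against $\mu_\psi\otimes\mu_\psi$ on $E_1 \times (\partial\D\setminus E_1)$ to anything useful, since mutual singularity of the two measures does not separate their supports metrically (the sets may interleave, so $|w-z|$ has no lower bound). There is also a quantitative problem: you need the cross term to be $o(K^{\alpha p/d})$, a power-law gain, not just $o(1)$, and the sketch you give does not produce that. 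As written, the ``technical heart'' is asserted rather than proved.

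The paper sidesteps this entirely. It never expands the moments. Instead it bounds $\tilde P_{\inn}(N,K) = \frac{1}{K}\sum_k \|P_N\psi(k)\|^2$ using the triangle inequality at the vector level: $\|P_N\psi(k)\| \le \|P_N\psi_1(k)\| + \|P_N\psi_2(k)\| \le \|P_N\psi_1(k)\| + \|\psi_2\|$, and then Cauchy--Schwarz on the time average gives $\tilde P_{\inn}(N,K) \le \bigl(\sqrt{\tfrac{1}{K}\sum_k\|P_N\psi_1(k)\|^2} + \|\psi_2\|\bigr)^2$. The first summand is controlled by Proposition~\ref{rage} applied to $\psi_1$ (whose spectral measure is U$\alpha$H), and the point is that $\|\psi_2\|^2 = 1 - \|\psi_1\|^2$ is automatically strictly less than $1$, so with $N \sim (\eta^2\|\psi_1\|^4 K^\alpha/C_1)^{1/d}$ one gets $\tilde P_{\inn}(N,K) < 1 - \tfrac{1}{2}\|\psi_1\|^2$ and hence $\tilde P_{\out}(N,K) > \tfrac{1}{2}\|\psi_1\|^2$, after which Remark~\ref{r.moments.pout} finishes the job. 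This requires no smallness of $\psi_2$ beyond what the decomposition hands you for free, and no cross-term cancellation at all. You should replace the moment expansion with this direct $\tilde P_{\inn}$ bound.
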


\begin{proof}
The proof of this result is again essentially identical to that of Theorem~6.1 in \cite{L}.  We provide the details for completeness.
\newline

Put $\psi_{\alpha c} = P_{\alpha c} \psi$ and $\psi_{\alpha s} = P_{\alpha s}\psi = \psi - \psi_{\alpha c}$. By Lemma~\ref{rogers.taylor}, we may choose Borel measures $\mu_1$ and $\mu_2$ on $\partial \D$ such that $\mu_{\psi_{\alpha c}} = \mu_1 + \mu_2$, $\mu_1$ is U$\alpha$H, $\mu_2(\partial \D) < \frac{1}{2} \| \psi_{\alpha c} \|^2$, and there is a set $S \subseteq \partial \D$ with $\mu_1(\partial \D \backslash S) = \mu_2(S)  = 0$.
\newline

Let $\psi_1 = \chi_S(U) \psi_{\alpha c}$, $\psi_2 = \psi - \psi_1$.  By the spectral theorem, we may observe that
\begin{align*}
\mu_{\psi_1}(E)
  & = \langle \psi_1, \chi_E(U) \psi_1 \rangle \\
  & = \langle \psi_{\alpha c}, \chi_{E \cap S} \psi_{\alpha c} \rangle \\
  & = \mu_{\psi_{\alpha c}}(E \cap S) \\
  & = \mu_1(E).
\end{align*}

Thus, $\mu_{\psi_1} = \mu_1$.  In particular, $\mu_{\psi_1}$ is U$\alpha$H.
\newline

Define the projection onto a ball of radius $N$ via $P_N x = \sum_{|n| \leq N} \langle \varphi_n, x \rangle \varphi_n$.  We may choose a constant $C$ which depends solely on $d$ such that $\#\{ n : |n| \leq N \} \leq CN^d $.  In particular, $\|P_N\|_1 = \tr(P_N) \leq CN^d$.  By Proposition~\ref{rage}, we may choose a constant $C = C_{\psi_1}$ for which
\[
\frac{1}{K} \sum_{j=0}^{K-1} \langle \psi_(j), P_N \psi_1(j) \rangle < C_{\psi_1} \| P_N \|_1 K^{-\alpha}.
\]
Using the fact that $P_N$ is a projection, we see that
\begin{align*}
\frac{1}{K} \sum_{k=0}^{K-1} \| P_N \psi_1(k) \|^2
  & = \frac{1}{K} \sum_{k=0}^{K-1} \langle \psi_1(k), P_N \psi_1(k) \rangle \\
  & < C_{\psi_1} \| P_N \|_1 K^{-\alpha} \\
  & \leq C_1 N^d K^{-\alpha}
\end{align*}
with $C_1 = C_{\psi_1} C$.

Thus, we have
\begin{align*}
\tilde{P}_{\inn}(N,K)
   & = \frac{1}{K} \sum_{k=0}^{K-1} \|P_N \psi(k) \|^2 \\
   & \leq \frac{1}{K} \sum_{k=0}^{K-1} \big( \|P_N \psi_1(k) \| + \| P_N \psi_2(k) \| \big)^2 \\
   & \leq \frac{1}{K} \sum_{k=0}^{K-1} \big( \|P_N \psi_1(k) \| + \|\psi_2 \| \big)^2 \\
   & \leq \left( \sqrt{ \frac{1}{K}\sum_{k=0}^{K-1} \|P_N \psi_1(k) \|^2 } + \|\psi_2\| \right)^2 \\
   & < \left( \sqrt{ C_1 N^d K^{-\alpha} } + \| \psi_2\| \right)^2,
\end{align*}
where we have used projectivity of $P_N$ and unitarity of $U$ in the third line and Cauchy-Schwarz in the fourth.  Choose $\eta>0$ with $ \eta < \frac{\sqrt{6}-2}{2} $.  Hence,
\begin{align*}
\tilde{P}_{\inn} \left( \left( \frac{\eta^2 \|\psi_1\|^4 K^{\alpha}}{ C_1} \right)^{1/d}, K \right)
 & < \left( \eta \| \psi_1 \|^2  + \| \psi_2 \|\right)^2 \\
 & < 1 - \frac{1}{2} \| \psi_1 \|^2,
\end{align*}
where we have used $\|\psi_1\| , \|\psi_2\| \leq 1$ and the upper bound on $\eta$ to obtain the second line of the estimate.  It follows that
\[
\tilde{P}_{\out}\left( \left( \frac{\eta^2 \|\psi_1\|^4 K^{\alpha}}{C_1} \right)^{1/d}, K \right) > \frac{1}{2} \| \psi_1 \|^2.
\]
Making use of Remark~\ref{r.moments.pout}, we see that
\begin{align*}
\left\langle |X|^p_{\psi} \right\rangle(K)
   & \geq  \left( \frac{\eta^2 \|\psi_1\|^4 K^{\alpha}}{C_1} \right)^{p/d} \tilde{P}_{\out} \left( \left( \frac{\eta^2 \|\psi_1\|^4 K^{\alpha}}{C_1} \right)^{1/d} , K \right) \\
   & \geq C_{\psi,p} K^{\frac{\alpha p}{d}},
\end{align*}
where we take $C_{\psi,p} =  \frac{\| \psi_1 \|^2}{2} \left( \frac{ \eta^2 \| \psi_1 \|^4 }{C_1} \right)^{p/d}$.
\end{proof}

\begin{remark}
One should note that Proposition~\ref{ac.not.zero} is indeed stronger than the second part of Proposition~\ref{UaH}, since $\mu_{\psi} $ U$\alpha$H $\implies \mu_{\psi}$ is $\alpha$-continuous $\implies P_{\alpha c} \psi = \psi \neq 0$.
\end{remark}

Proposition~\ref{ac.not.zero} immediately yields a lower bound on the transport exponents $\tilde{\beta}_{\psi}^{\pm}(p)$.

\begin{coro}\label{c.coroone}
Suppose that $\psi$ is such that $P_{\alpha c} \psi \neq 0$ for some $0 \leq \alpha \leq 1$. One then has
\[
\tilde{\beta}_{\psi}^-(p) \geq \frac{\alpha}{d}.
\]
\end{coro}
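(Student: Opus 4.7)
The plan is to derive this essentially directly from Proposition~\ref{ac.not.zero}, since that result already provides a polynomial lower bound on $\langle |X|^p_\psi \rangle(K)$ of exactly the order needed, and the transport exponent $\tilde{\beta}^-_\psi(p)$ is by definition the liminf of the logarithmic ratio of this quantity against $K^p$. So the only work is passing from the pointwise inequality to the asymptotic one.

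Concretely, I would split into the two cases separated in Proposition~\ref{ac.not.zero}. For $\alpha < 1$, the hypothesis $P_{\alpha c}\psi \neq 0$ lets us apply Proposition~\ref{ac.not.zero} to obtain a constant $C_{\psi,p} > 0$ with
\[
\big\langle |X|^p_\psi \big\rangle(K) > C_{\psi,p} K^{\alpha p/d}
\]
for all $K > 0$. Taking logarithms, dividing by $p \log K$ (for $K \geq 2$, say, so the denominator is positive), and then taking the liminf as $K \to \infty$ yields
\[
\tilde{\beta}^-_\psi(p) = \liminf_{K \to \infty} \frac{\log\big(\langle |X|^p_\psi \rangle(K)\big)}{p \log K} \geq \liminf_{K \to \infty} \left( \frac{\log C_{\psi,p}}{p \log K} + \frac{\alpha}{d} \right) = \frac{\alpha}{d},
\]
since the first term vanishes in the limit.

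For the borderline case $\alpha = 1$, the bound supplied by Proposition~\ref{ac.not.zero} is $C_{\psi,p}(K/\log K)^{p/d}$. Taking logarithms gives $\log C_{\psi,p} + (p/d)\log K - (p/d) \log \log K$, and after dividing by $p \log K$ the last term contributes $-(\log\log K)/(d \log K) \to 0$. Thus the same liminf argument produces $\tilde{\beta}^-_\psi(p) \geq 1/d = \alpha/d$, as required.

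There is no genuine obstacle here: the entire content is packaged into Proposition~\ref{ac.not.zero}, and the corollary is just the translation of a polynomial lower bound into the language of transport exponents. The only minor point worth flagging in the writeup is that the logarithmic correction at $\alpha = 1$ is absorbed by the liminf and does not degrade the exponent.
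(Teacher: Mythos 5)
Your proof is correct and takes the same approach as the paper; the paper simply states the conclusion as ``immediate from the definitions,'' and you have written out the (entirely routine) log-and-liminf computation, including the correct observation that the $\log\log K$ correction at $\alpha=1$ vanishes in the limit.
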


\begin{proof}
This is immediate from the definitions.
\end{proof}

A succinct way of restating this last result involves the concept of the (upper) Hausdorff dimension of a measure. Recall the following definition; see \cite{F} for background and more information.

\begin{defi}
Let $\mu$ be a finite Borel measure on $\partial \D$. The upper Hausdorff dimension of $\mu$ is given by
$$
\dim_{\Hd}^+(\mu) = \inf \{ \dim_{\Hd}(S) : S \subset \partial \D \text{ measurable}, \; \mu(S) = \mu(\partial \D) \}.
$$
Loosely speaking, the upper Hausdorff dimension of the measure $\mu$ is the smallest Hausdorff dimension of a set which supports $\mu$.
\end{defi}

\begin{coro}\label{c.corotwo}
We have
\[
\tilde{\beta}_{\psi}^-(p) \geq \frac{\dim_{\Hd}^+(\mu_\psi^U)}{d}.
\]
\end{coro}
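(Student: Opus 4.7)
The plan is to deduce this corollary from Corollary~\ref{c.coroone} by showing that the hypothesis $P_{\alpha c}\psi \neq 0$ is met for every $\alpha$ strictly less than $\dim_{\Hd}^+(\mu_\psi^U)$. Once this is verified, Corollary~\ref{c.coroone} delivers $\tilde{\beta}_\psi^-(p) \geq \alpha/d$, and letting $\alpha \nearrow \dim_{\Hd}^+(\mu_\psi^U)$ produces the claimed bound.

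To verify that hypothesis, I would fix such an $\alpha$ and argue by contradiction. If $P_{\alpha c}\psi = 0$, then $\psi = P_{\alpha s}\psi \in \Hi_{\alpha s}$, so by the definition of $\Hi_{\alpha s}$ recorded just before Proposition~\ref{ac.not.zero}, the spectral measure $\mu_\psi^U$ is $\alpha$-singular, meaning it is supported on some Borel set $S \subseteq \partial \D$ with $h^\alpha(S) = 0$ and $\mu_\psi^U(S) = \mu_\psi^U(\partial\D)$. The characterization $\dim_{\Hd}(S) = \inf\{\beta : h^\beta(S) = 0\}$ recalled earlier in the section then forces $\dim_{\Hd}(S) \leq \alpha$, so $S$ is an admissible competitor in the infimum defining $\dim_{\Hd}^+(\mu_\psi^U)$. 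This yields $\dim_{\Hd}^+(\mu_\psi^U) \leq \alpha$, contradicting the choice of $\alpha$.

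The only step where some care is needed is the translation from $P_{\alpha c}\psi = 0$ to "$\mu_\psi^U$ is $\alpha$-singular", but this is essentially built into the definition of the subspaces $\Hi_{\alpha c}$ and $\Hi_{\alpha s}$ and is a routine application of the spectral theorem. Since all of the genuine analytic content has already been carried out in Proposition~\ref{ac.not.zero} and Corollary~\ref{c.coroone}, I do not anticipate any real obstacle here: the present corollary amounts to a repackaging of Corollary~\ref{c.coroone} in the more invariant language of the upper Hausdorff dimension of the spectral measure.
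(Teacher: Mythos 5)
Your argument is correct and is essentially the same as the paper's: both fix $\alpha$ strictly below $\dim_{\Hd}^+(\mu_\psi^U)$, note that the $\alpha$-singular part of $\mu_\psi^U$ cannot be all of $\mu_\psi^U$ (since a support of an $\alpha$-singular measure would have Hausdorff dimension at most $\alpha$, contradicting the definition of upper Hausdorff dimension), invoke Corollary~\ref{c.coroone}, and let $\alpha$ increase to the dimension. The only cosmetic difference is that you phrase the nontriviality in terms of $P_{\alpha c}\psi \neq 0$ while the paper phrases it in terms of the Rogers--Taylor decomposition of the measure; these are equivalent via the orthogonal decomposition $\Hi = \Hi_{\alpha c} \oplus \Hi_{\alpha s}$.
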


\begin{proof}
If $\dim_{\Hd}^+(\mu_\psi^U) = 0$, there is nothing to prove. Thus, let us assume that $\dim_{\Hd}^+(\mu_\psi^U) > 0$ and choose $\alpha \in (0,\dim_{\Hd}^+(\mu_\psi^U))$. Since $\alpha < \dim_{\Hd}^+(\mu_\psi^U)$, the definition of the upper Hausdorff dimension implies that in the Rogers-Taylor decomposition of $\mu_\psi^U$ into an $\alpha$-continuous piece and an $\alpha$-singular piece, the former must be nontrivial (for otherwise we could choose a suitable support of the latter to derive a contradiction). This implies that the previous corollary is applicable with the $\alpha$ in question and hence yields $\tilde{\beta}_{\psi}^{\pm}(p) \geq \frac{\alpha}{d}$. Since this estimate holds for every $\alpha \in (0,\dim_{\Hd}^+(\mu_\psi^U))$, the assertion follows.
\end{proof}

\subsection{Extracting the $\alpha$-Continuous Component of a Measure on the Circle}

The previous subsection has shown that a non-trivial $\alpha$-continuous component of a spectral measure leads to a corresponding dynamical lower bound. This motivates the question of finding a useful way to extract and study the $\alpha$-continuous component of a finite measure on $\partial \D$. In this subsection we summarize some known results that answer this question and provide a bridge to the discussion of CMV matrices in Section~\ref{s.4}.

Let $\mu$ be a finite measure on $\partial \D$. Given $\alpha \in (0,1)$ and $z_0 \in \partial \D$, let
$$
D^\alpha_\mu(z_0) = \limsup_{\varepsilon \downarrow 0} \frac{\mu \{ z \in \partial \D : z = z_0 e^{i \varphi}, \, \varphi \in (-\varepsilon,\varepsilon) \}}{(2 \varepsilon)^\alpha} \in [0,\infty]
$$
and
$$
S_\alpha = \{ z \in \partial \D : D^\alpha_\mu(z) = \infty \}.
$$

The following result is due to Rogers and Taylor \cite{RT1, RT2}; see also \cite[Theorem~10.8.7]{S2} and its discussion therein.

\begin{theorem}\label{t.rogerstaylor}
Consider the restrictions
$$
\mu_{\alpha c} = \mu \Big|_{\partial \D \setminus S_\alpha} , \quad \mu_{\alpha s} = \mu \Big|_{S_\alpha}.
$$
Then, $\mu_{\alpha c}$ gives zero weight to measurable $S \subseteq \partial \D$ with $h^\alpha(S) = 0$ and $\mu_{\alpha s}$ is supported by a measurable set $S \subseteq \partial \D$ with $h^\alpha(S) = 0$. In particular,
$$
\mu = \mu_{\alpha c} + \mu_{\alpha s}
$$
is the decomposition of $\mu$ into an $\alpha$-continuous piece and an $\alpha$-singular piece.
\end{theorem}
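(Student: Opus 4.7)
The plan is to prove the two assertions of the theorem separately, each via a Vitali-type covering argument on the circle. Since $\partial \D$ with the arc-length metric is a compact metric space, the classical $5r$-covering lemma applies. A preliminary measurability check is that $D^\alpha_\mu$ is Borel measurable, which follows from the monotonicity of $\varepsilon \mapsto \mu(I_\varepsilon(z))$, where $I_\varepsilon(z) = \{z e^{i\varphi} : \varphi \in (-\varepsilon,\varepsilon)\}$; this lets the $\limsup$ defining $D^\alpha_\mu$ be computed along a countable sequence of radii.

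First I would prove the stronger statement $h^\alpha(S_\alpha) = 0$, which immediately implies that $\mu_{\alpha s}$ is $\alpha$-singular. Fix $M > 0$ and a target scale $\delta > 0$. For each $z \in S_\alpha$, the condition $D^\alpha_\mu(z) = \infty$ furnishes arbitrarily small $\varepsilon \in (0,\delta/10)$ with $\mu(I_\varepsilon(z)) \geq M(2\varepsilon)^\alpha$. These arcs form a Vitali cover of $S_\alpha$, from which I extract a pairwise disjoint subfamily $\{I_{\varepsilon_i}(z_i)\}$ whose $5$-fold dilates cover $S_\alpha$. Disjointness and finiteness of $\mu$ give
\[
\sum_i (2\varepsilon_i)^\alpha \leq \frac{\mu(\partial \D)}{M},
\]
so the dilated family is a $\delta$-cover of $S_\alpha$ with $\sum (\mathrm{diam})^\alpha \leq 5^\alpha \mu(\partial \D)/M$, and hence $h^\alpha(S_\alpha) \leq 5^\alpha \mu(\partial \D)/M$. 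Letting $M \to \infty$ yields $h^\alpha(S_\alpha) = 0$.

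For the continuous piece, I would show that $h^\alpha(S) = 0$ implies $\mu_{\alpha c}(S) = 0$. Decompose $\partial \D \setminus S_\alpha = \bigcup_n E_n$ with $E_n = \{z : D^\alpha_\mu(z) < n\}$, and refine each $E_n = \bigcup_k E_{n,k}$, where
\[
E_{n,k} = \left\{ z \in E_n : \mu(I_\varepsilon(z)) \leq n(2\varepsilon)^\alpha \text{ for all } \varepsilon \in (0,1/k) \right\};
\]
the definition of $\limsup$ guarantees that these exhaust $E_n$. Given $\eta > 0$, choose a $\delta$-cover $(S_j)_j$ of $S$ with $\delta < 1/k$ and $\sum_j (\mathrm{diam}\, S_j)^\alpha < \eta$. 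For each $j$ with $S_j \cap E_{n,k} \neq \emptyset$, pick $z_j \in S_j \cap E_{n,k}$; then $S_j$ lies in the closed arc of half-length $\mathrm{diam}(S_j) < 1/k$ around $z_j$, and the uniform density bound on $E_{n,k}$ yields $\mu(S_j) \leq 2^\alpha n (\mathrm{diam}\, S_j)^\alpha$. Summing gives $\mu(S \cap E_{n,k}) \leq 2^\alpha n \eta$, and sending $\eta \to 0$ then unioning over $n, k$ finishes.

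The main obstacle is essentially bookkeeping in the Vitali step: one must restrict the initial Vitali cover in the singular direction to arcs of half-length below $\delta/10$, so that the dilated disjoint subfamily still consists of arcs of diameter at most $\delta$. The conceptual heart of the continuous direction is the upgrade from the pointwise $\limsup$-bound on $E_n$ to the uniform-in-$\varepsilon$ bound available on each $E_{n,k}$, which is exactly what allows passage from an arbitrary $h^\alpha$-cover of $S$ to a genuine $\mu$-estimate.
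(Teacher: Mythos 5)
The paper does not actually prove Theorem~\ref{t.rogerstaylor}; it is stated with a citation to Rogers--Taylor and to Simon's OPUC treatise. Your proposal therefore supplies a proof where the paper has none, and the argument you give --- a Vitali $5r$-covering bound for the singular part, and an exhaustion of $\partial\D\setminus S_\alpha$ into sets carrying locally uniform density bounds for the continuous part --- is the standard one underlying those references and is essentially correct. The core steps check out: for $h^\alpha(S_\alpha)=0$, disjointness of the Vitali subfamily correctly trades the density lower bound $M(2\varepsilon_i)^\alpha \le \mu(I_{\varepsilon_i}(z_i))$ against $\mu(\partial\D)<\infty$, and the $5$-fold dilates stay below the mesh $\delta$ because of your restriction $\varepsilon_i<\delta/10$; for the continuous part, the refinement $E_n=\bigcup_k E_{n,k}$ is exactly the needed upgrade from a pointwise $\limsup$ bound to a bound uniform in $\varepsilon$, which is what permits comparison against an arbitrary $h^\alpha$-cover.

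Two small gaps remain. First, $I_\varepsilon(z)$ is an \emph{open} arc, so containing $S_j$ in the \emph{closed} arc of half-length $\mathrm{diam}(S_j)$ about $z_j$ does not directly give $\mu(S_j)\le n(2\,\mathrm{diam}\,S_j)^\alpha$; instead pick $\varepsilon'\in(\mathrm{diam}\,S_j,\,1/k)$, use $\mu(S_j)\le\mu(I_{\varepsilon'}(z_j))\le n(2\varepsilon')^\alpha$, and let $\varepsilon'\downarrow\mathrm{diam}\,S_j$. Second, the sets $E_{n,k}$ are defined via an uncountable conjunction over $\varepsilon\in(0,1/k)$, so their Borel measurability --- which you implicitly need to speak of $\mu(S\cap E_{n,k})$ --- is not automatic. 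The same monotonicity of $\varepsilon\mapsto\mu(I_\varepsilon(z))$ that you invoke to make $D^\alpha_\mu$ measurable lets you restrict the conjunction to rational $\varepsilon$, and this resolves the issue. Neither point changes the structure of the argument.
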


This shows that the $\alpha$-derivative $D^\alpha_\mu$ of a measure may be used to extract the $\alpha$-continuous component of it. The following connection is also very useful. Recall that the Carath\'eodory function $F$ associated with $\mu$ is given by
$$
F(z) = \int_{\partial \D} \frac{e^{i\theta} + z}{e^{i\theta} - z} \, d\mu(e^{i\theta}).
$$

The following equivalence is established in \cite[Lemma~10.8.6]{S2}.

\begin{prop}\label{p.djlsconnection}
For $z_0 \in \partial \D$, we have
$$
D^\alpha_\mu(z_0) = \infty \; \Leftrightarrow \; \limsup_{r \uparrow 1} (1 - r)^{1-\alpha} |F(rz_0)| = \infty.
$$
\end{prop}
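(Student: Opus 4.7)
The plan is to tie the boundary behavior of $F$ at $rz_0$ directly to the mass that $\mu$ places on short arcs centered at $z_0$. Writing $z_0 = e^{i\theta_0}$ and noting that the real part of $\frac{e^{i\theta}+rz_0}{e^{i\theta}-rz_0}$ is the Poisson kernel
$$
P_r(\theta-\theta_0) = \frac{1-r^2}{1-2r\cos(\theta-\theta_0)+r^2},
$$
we have $\operatorname{Re} F(rz_0) = \int P_r(\theta-\theta_0)\,\mathrm d\mu(e^{i\theta})$. For the modulus I would use the elementary estimate
$$
\left| \frac{e^{i\theta}+rz_0}{e^{i\theta}-rz_0} \right| \leq \frac{2}{|e^{i\theta}-rz_0|} \leq \frac{C_1}{(1-r)+|\theta-\theta_0|},
$$
paired with the matching lower bound $P_r(\varphi) \geq \frac{1}{2(1-r)}$ whenever $|\varphi| \leq 1-r$. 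Write $I_\varepsilon = \{ z_0 e^{i\varphi} : \varphi \in (-\varepsilon,\varepsilon)\}$ for the arc appearing in the definition of $D^\alpha_\mu$.

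For the direction $D^\alpha_\mu(z_0) = \infty \Rightarrow \limsup_{r\uparrow 1}(1-r)^{1-\alpha}|F(rz_0)| = \infty$, I would select a sequence $\varepsilon_n \downarrow 0$ with $\mu(I_{\varepsilon_n})/(2\varepsilon_n)^\alpha \to \infty$ and set $r_n = 1-\varepsilon_n$. Restricting the Poisson integral to $I_{\varepsilon_n}$ and applying the pointwise bound on $P_{r_n}$ yields
$$
|F(r_n z_0)| \geq \operatorname{Re} F(r_n z_0) \geq \frac{\mu(I_{\varepsilon_n})}{2(1-r_n)},
$$
so $(1-r_n)^{1-\alpha}|F(r_n z_0)| \geq \tfrac{1}{2}\mu(I_{\varepsilon_n})/\varepsilon_n^\alpha$, which diverges.

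For the converse I would argue by contraposition. If $D^\alpha_\mu(z_0) < \infty$, there are $C>0$ and $\delta_0>0$ with $\mu(I_\delta) \leq C\delta^\alpha$ for $\delta \leq \delta_0$, while the finite total mass of $\mu$ trivially controls $\delta \in (\delta_0,\pi]$. Set $\varepsilon = 1-r$, pick $N$ with $2^N\varepsilon \asymp \pi$, and decompose the integral into the shells $\{|\theta-\theta_0|\leq \varepsilon\}$ and $\{2^{k-1}\varepsilon < |\theta-\theta_0|\leq 2^k\varepsilon\}$ for $1\leq k \leq N$. The kernel estimate gives
$$
|F(rz_0)| \leq C_1 \Bigg(\frac{\mu(I_\varepsilon)}{\varepsilon} + \sum_{k=1}^N \frac{\mu(I_{2^k\varepsilon})}{2^{k-1}\varepsilon}\Bigg) \leq C_2 \,\varepsilon^{\alpha-1}\sum_{k\geq 0} 2^{k(\alpha-1)},
$$
and since $\alpha < 1$ the geometric series converges, yielding $(1-r)^{1-\alpha}|F(rz_0)| \leq C_3$ uniformly as $r\uparrow 1$.

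The main obstacle is the converse direction, because $|F(rz_0)|$ contains not only the nonnegative Poisson mass $\operatorname{Re} F(rz_0)$ but also the oscillatory conjugate Poisson integral $\operatorname{Im} F(rz_0)$, which cannot be controlled merely by restricting $\mu$ to a single small arc. The dyadic decomposition circumvents this by bounding $|F|$ at the kernel level, converting the $\alpha$-H\"older growth of $\mu$-arc masses directly into the sharp $(1-r)^{\alpha-1}$ upper bound.
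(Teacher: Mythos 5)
The paper itself offers no proof of this proposition; it simply records it as a known fact, citing \cite[Lemma~10.8.6]{S2}. So there is nothing in the paper to compare your argument against line by line. What you have produced is a self-contained proof, and it is correct.

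Both halves are sound. For the forward direction, you correctly exploit the positivity of the Poisson kernel: $|F(rz_0)|\geq\operatorname{Re}F(rz_0)$, and restricting the Poisson integral to $I_{1-r}$ together with the pointwise lower bound $P_r(\varphi)\geq\tfrac{1}{2(1-r)}$ on $|\varphi|\leq 1-r$ gives $|F(rz_0)|\geq\mu(I_{1-r})/(2(1-r))$, which diverges along your sequence $r_n=1-\varepsilon_n$. For the converse, the contraposition is the right move: the difficulty is exactly the one you flag, that $\operatorname{Im}F$ is a conjugate Poisson integral and is not sign-definite, so one cannot bound $|F|$ through $\operatorname{Re}F$ alone. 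Your kernel-level estimate $\bigl|\tfrac{e^{i\theta}+rz_0}{e^{i\theta}-rz_0}\bigr|\leq C_1\bigl((1-r)+|\theta-\theta_0|\bigr)^{-1}$ is correct (it follows from $|e^{i\theta}+rz_0|\leq 2$ and $|1-re^{-i\varphi}|^2=(1-r)^2+4r\sin^2(\varphi/2)\gtrsim((1-r)+|\varphi|)^2$), and the dyadic shell decomposition converts the arc-mass bound $\mu(I_\delta)\lesssim\delta^\alpha$ into $|F(rz_0)|\lesssim(1-r)^{\alpha-1}\sum_{k\geq0}2^{k(\alpha-1)}$, with the series converging precisely because $\alpha<1$ (which is the standing hypothesis in this part of the paper). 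The handling of shells beyond $\delta_0$ via the finite total mass is a small constant that is swallowed by $(1-r)^{\alpha-1}\to\infty$. This is the standard Poisson-kernel argument used for such $\alpha$-derivative versus boundary-blowup equivalences (on the real line as well as on the circle), and it is a legitimate proof of the cited lemma.
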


As we will see in Section~\ref{s.4}, the rate of divergence of $|F(rz_0)|$ as $r \uparrow 1$ can be studied by quite effective means in the case of CMV matrices. In particular, this provides a direct path toward dynamical lower bounds for such operators.

\subsection{A Consequence of the Parseval Identity}\label{ss.parseval}

In this subsection we work out a unitary analog of a lemma that has proved to be useful in the self-adjoint case. Namely, it is a consequence of the Parseval identity that a modified time average of the dynamics is related, via Fourier transform, to an energy average of the resolvent of the operator.

\begin{lemma}\label{l.parseval}
For $K \ge 1$ and $n$ arbitrary, we have
$$
\sum_{k \ge 0} e^{-2k/K} a(n,k) = e^{2/K} \int_0^{2\pi} \left| \langle \varphi_n , (U - e^{1/K + i\theta})^{-1} \psi \rangle \right|^2 \, \frac{d\theta}{2\pi}.
$$
\end{lemma}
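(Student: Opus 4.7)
The plan is to expand the resolvent as a Neumann series in $U^k$ and then recognize the right-hand side as a Parseval identity for the Fourier series of a sequence derived from the matrix coefficients $c_k := \langle \varphi_n, U^k \psi \rangle$. Since $a(n,k) = |c_k|^2$, the claim is in essence a statement that the generating function $\sum_k e^{-k/K} c_k e^{-ik\theta}$ is, up to an explicit phase, the resolvent matrix element in question.

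More precisely, first I would observe that for $w = e^{1/K + i\theta}$ we have $|w| = e^{1/K} > 1$, so $\|w^{-1} U\| < 1$ and the Neumann series
\[
(U - w)^{-1} = -\frac{1}{w} \sum_{k \ge 0} w^{-k} U^k
\]
converges in operator norm. Pairing with $\varphi_n$ on the left and $\psi$ on the right yields
\[
\langle \varphi_n, (U - w)^{-1} \psi \rangle = -e^{-1/K} e^{-i\theta} \sum_{k \ge 0} e^{-k/K} e^{-ik\theta} c_k.
\]
The sequence $(e^{-k/K} c_k)_{k \ge 0}$ lies in $\ell^2(\Z_{\ge 0})$ since $|c_k| \le 1$ by Cauchy--Schwarz and the geometric factor decays, so this series converges in $L^2(d\theta/2\pi)$.

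Next I would apply Parseval's identity on the circle to this sequence, which gives
\[
\int_0^{2\pi} \left| \sum_{k \ge 0} e^{-k/K} e^{-ik\theta} c_k \right|^2 \frac{d\theta}{2\pi} = \sum_{k \ge 0} e^{-2k/K} |c_k|^2 = \sum_{k \ge 0} e^{-2k/K} a(n,k).
\]
Taking absolute values squared in the previous display and integrating in $\theta$, the unit-modulus prefactor $-e^{-1/K} e^{-i\theta}$ contributes exactly a factor of $e^{-2/K}$, so
\[
\int_0^{2\pi} \left| \langle \varphi_n, (U - e^{1/K+i\theta})^{-1} \psi \rangle \right|^2 \frac{d\theta}{2\pi} = e^{-2/K} \sum_{k \ge 0} e^{-2k/K} a(n,k).
\]
Multiplying both sides by $e^{2/K}$ yields the stated identity.

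There is no real obstacle here; the only point requiring minimal care is justifying the interchange of the sum and the integral when passing to Parseval, which is handled by the $\ell^2$-convergence noted above (equivalently, one may first truncate the Neumann series, apply Parseval to the polynomial partial sums, and pass to the limit using dominated convergence in the $\theta$-integral via the geometric bound $|c_k| \le 1$).
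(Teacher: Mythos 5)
Your proposal is correct and is essentially the same argument as the paper's: both identify the resolvent matrix element $\langle \varphi_n, (U - e^{1/K+i\theta})^{-1}\psi\rangle$ as (up to the explicit unit-modulus prefactor $-e^{-1/K-i\theta}$) the Fourier series with coefficients $e^{-k/K}\langle\varphi_n, U^k\psi\rangle$ and then apply Parseval. The only cosmetic difference is that you expand the resolvent as a Neumann series in $U$ directly, while the paper phrases the same computation through the spectral representation $z \in \partial\mathbb D$ and the map $V$; the underlying geometric-series summation and the $\ell^2$-to-$L^2$ Parseval step are identical.
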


\begin{proof}
The proof is an adaptation of the proof of \cite[Lemma~3.2]{KKL} to the unitary case at hand. We give the details for the convenience of the reader.

Denote
$$
f(k) = \begin{cases} e^{-k/K} \int_{\partial \D} z^k \overline{u_\psi^n(z)} \, d\mu_\psi(z) & k \ge 0, \\ 0 & k < 0.  \end{cases}
$$
Then,
\begin{align*}
\hat f(\theta) & = \sum_{k \in \Z} e^{- ik\theta} f(k) \\
& = \sum_{k \ge 0} e^{-ik\theta} e^{-k/K} \int_{\partial \D} z^k \overline{u_\psi^n(z)} \, d\mu_\psi(z) \\
& = \int_{\partial \D} \frac{\overline{u_\psi^n(z)} \, d\mu_\psi(z)}{1 - e^{-1/K - i\theta} z} \\
& = e^{1/K + i\theta} \int_{\partial \D} \frac{\overline{u_\psi^n(z)} \, d\mu_\psi(z)}{e^{1/K + i\theta} - z} \\
& = - e^{1/K + i\theta} \langle \varphi_n , (U - e^{1/K + i\theta})^{-1} \psi \rangle.
\end{align*}
The Parseval identity now implies that
\begin{align*}
\sum_{k \ge 0} e^{-2k/K} a(n,k) & = \|f\|^2_{\ell^2(\Z)} \\
& = \frac{1}{2\pi} \|\hat f\|^2_{L^2(0,2\pi)} \\
& = e^{2/K} \int_0^{2\pi} \left| \langle \varphi_n , (U - e^{1/K + i\theta})^{-1} \psi \rangle \right|^2 \, \frac{d\theta}{2\pi},
\end{align*}
as claimed.
\end{proof}

\begin{remark} \label{exp.averages}
Lemma~\ref{l.parseval} suggests that instead of Ces\`aro averages, we consider the following averages,
$$%\begin{equation}\label{e.abelianav}
\langle f \rangle (K) = \frac{2}{K} \sum_{k \ge 0} e^{-2k/K} f(j).
$$%\end{equation}
The transport exponents associated with these time averages are actually the same as the ones associated with Ces\`aro time averages, provided that the function $f$ satisfies some power-law upper bound; see \cite[Lemma~2.19]{DT}. Thus, when studying the quantities $\tilde \beta_\psi^\pm(p)$, we can freely use the more convenient underlying time average.
\end{remark}

\subsection{Packing Dimensions of Spectral Measures}

In \cite{GSB}, the authors prove a companion result to those in \cite{L}, by bounding $\tilde{\beta}^+$ from below by the packing dimension of the relevant spectral measure in the self-adjoint case.

Proposition~1 of \cite{GSB} carries over to the present context. For the reader's convenience, we produce the details presently.

\begin{lemma} \label{gsb1}
Let $F \subseteq A$ be a subset of the indexing set of the orthonormal basis $(\varphi_n)_{n \in A}$.  Given $K \in \Z_+$ and $0 < \epsilon < 1$, choose $N = N(K,\epsilon)$ so that
\[
2^{N-2} \leq \frac{K \pi}{\sqrt{\epsilon}} < 2^{N-1}.
\]

Partition $\partial \D$ into dyadic arcs as follows: for $ 0 \leq j \leq N$, put
\begin{align*}
\theta_{j,N} & = \frac{j\pi}{2^{N-1}}, \\
\gamma_{j,N} & = e^{i \theta_{j,N}}, \\
\Gamma_{j,N} & = \left\{ e^{i \theta} : \theta_{j,N} \leq \theta < \theta_{j+1,N} \right\}.
\end{align*}

Given $\epsilon > 0$, one has
\[
\frac{1}{K} \sum_{n \in F} \sum_{l = 0}^{K-1} \left| \left\langle \varphi_n , \psi(l) \right\rangle \right|^2 \leq 2 \epsilon + \frac{8 \pi}{\sqrt{\epsilon}} \sum_{n \in F} \sum_{j = 0}^{2^N-1} \left| \left\langle \varphi_n, \chi_{_{\Gamma_{j,N}}} (U) \psi \right\rangle \right|^2.
\]
\end{lemma}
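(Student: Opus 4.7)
My plan is to adapt the standard dyadic-approximation trick. On each arc $\Gamma_{j,N}$, I would replace $z^l$ by its value $\gamma_{j,N}^l$ at the left endpoint and use $|a+b|^2 \leq 2|a|^2 + 2|b|^2$ to split $|\langle \varphi_n, \psi(l)\rangle|^2$ into a main term that is a finite Fourier sum over the $2^N$-th roots of unity (amenable to discrete Parseval) and a remainder that is small pointwise in $l$ thanks to the calibration of $N$. More concretely, letting $\nu_n(E) = \langle \varphi_n, \chi_E(U)\psi\rangle$ and $c_{n,j} = \nu_n(\Gamma_{j,N}) = \langle \varphi_n, \chi_{\Gamma_{j,N}}(U)\psi\rangle$, the spectral theorem gives $\langle \varphi_n, \psi(l)\rangle = \int z^l \, \dd \nu_n(z)$, which I would split as
\[
\langle \varphi_n, \psi(l)\rangle = M_{n,l} + R_{n,l}, \qquad M_{n,l} = \sum_{j=0}^{2^N-1} \gamma_{j,N}^l \, c_{n,j},
\]
with $R_{n,l} = \sum_j \int_{\Gamma_{j,N}}(z^l - \gamma_{j,N}^l)\, \dd \nu_n(z)$.

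For the main term, $\{\gamma_{j,N}\}_{j=0}^{2^N-1}$ is precisely the group of $2^N$-th roots of unity, so the discrete orthogonality relations yield $\sum_{l=0}^{2^N-1} \gamma_{j,N}^l \overline{\gamma_{j',N}^l} = 2^N \delta_{jj'}$ and hence $\sum_{l=0}^{2^N-1}|M_{n,l}|^2 = 2^N \sum_j |c_{n,j}|^2$. Since $2^N > K$ (which follows from the lower calibration $2^{N-2} \leq K\pi/\sqrt{\epsilon}$), dropping nonnegative terms gives $\frac{1}{K}\sum_{l=0}^{K-1}|M_{n,l}|^2 \leq (2^N/K)\sum_j |c_{n,j}|^2$; combined with $2^{N-1} \leq 2K\pi/\sqrt{\epsilon}$ this produces $\frac{2}{K}\sum_{l=0}^{K-1}|M_{n,l}|^2 \leq (8\pi/\sqrt{\epsilon})\sum_j |c_{n,j}|^2$, which, after summing over $n \in F$, is exactly the second term of the stated inequality.

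For the remainder, the calibration $K\pi/2^{N-1} < \sqrt{\epsilon}$ together with the elementary estimate $|z^l - \gamma^l| \leq l|z - \gamma|$ on $\partial \D$ yields the uniform pointwise bound $|z^l - \gamma_{j,N}^l| < \sqrt{\epsilon}$ on $\Gamma_{j,N}$ for all $0 \leq l \leq K-1$. Thus $g_l(z) := \sum_j (z^l - \gamma_{j,N}^l) \chi_{\Gamma_{j,N}}(z)$ is a bounded Borel function with $\|g_l\|_\infty \leq \sqrt{\epsilon}$, and $R_{n,l} = \langle \varphi_n, g_l(U)\psi\rangle$ by the Borel functional calculus. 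Bessel's inequality applied to the orthonormal family $(\varphi_n)_{n \in F}$ then gives $\sum_{n \in F} |R_{n,l}|^2 \leq \|g_l(U)\psi\|^2 \leq \|g_l\|_\infty^2 \leq \epsilon$, and averaging over $l$ and inserting the factor $2$ from the $|a+b|^2$ split supplies the $2\epsilon$ term. I do not foresee any real obstacle beyond verifying that the two-sided calibration of $N$ simultaneously produces the correct Parseval constant and the required pointwise remainder bound, both of which fall directly out of the inequalities $2^{N-2} \leq K\pi/\sqrt{\epsilon} < 2^{N-1}$.
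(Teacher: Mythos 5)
Your proof is correct and follows essentially the same route as the paper's: the same dyadic-arc approximation of $U^l\psi$ by a finite sum over $2^N$-th roots of unity, the same $|a+b|^2 \le 2|a|^2 + 2|b|^2$ split, the same discrete orthogonality / extension of the $l$-sum from $K$ to $2^N$, and the same use of the calibration of $N$ to get both the $\sqrt{\epsilon}$ pointwise remainder bound and the $8\pi/\sqrt{\epsilon}$ constant. The only difference is notational: you phrase the remainder as $\langle \varphi_n, g_l(U)\psi\rangle$ with $\|g_l\|_\infty \le \sqrt{\epsilon}$ and apply Bessel, whereas the paper introduces the vector $\psi_K(l)$ and bounds $\|\psi(l)-\psi_K(l)\|^2 < \epsilon$ directly; these are the same estimate.
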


\begin{proof}
We can see that $\partial \D$ is the disjoint union of the $\Gamma_{j,N}$ as $j$ runs from 0 to $2^N - 1$. We may then approximate $\psi(l) = U^l \psi$ by
\[
\psi_K(l) = \sum_{j=0}^{2^N-1} \gamma_{j,N}^l \chi_{_{\Gamma_{j,N}}}(U) \psi.
\]
Indeed, one readily observes that, for $0 \leq l \leq K$, one has
\begin{align*}
\left\| \psi(l) - \psi_K(l) \right\|^2
   & = \left\| U^l \psi - \sum_{j=0}^{2^N-1} \gamma_{j,N}^l \chi_{_{\Gamma_{j,N}}}(U) \psi\right\|^2 \\
   & = \sum_{j=0}^{2^N-1} \int_{\Gamma_{j,N}} \! \left| z^l - \gamma_{j,N}^l \right|^2 \, \mathrm d \mu_{\psi}(z) \\
   & \leq \sum_{j=0}^{2^N-1} \int_{\Gamma_{j,N}} \! \left( \frac{l\pi}{2^{N-1}} \right)^2 \, \mathrm d \mu_{\psi}(z) \\
   & = \left( \frac{l\pi}{2^{N-1}} \right)^2 \\
   & < \epsilon.
\end{align*}
The first line is a definition, the second follows from the spectral theorem, the third by construction of the $\Gamma_{j,N}$, the fourth from $\mu_{\psi}(\partial \D) = \|\psi\| = 1$, and the fifth from our choice of $N$ and $0 \leq l \leq K$. It follows that
\begin{align*}
   & \frac{1}{K} \sum_{n \in F} \sum_{l = 0}^{K-1} \left| \left\langle \varphi_n , \psi(l) \right\rangle \right|^2 \\
   \leq & \frac{2}{K} \sum_{n \in F} \sum_{l = 0}^{K-1} \left| \left\langle \varphi_n , \psi_K(l) - \psi(l) \right\rangle \right|^2 + \frac{2}{K} \sum_{n \in F} \sum_{l = 0}^{K-1} \left| \left\langle \varphi_n , \psi_K(l) \right\rangle \right|^2 \\
   < & 2 \epsilon + \frac{2}{K} \sum_{n \in F} \sum_{l = 0}^{2^N-1} \left| \left\langle \varphi_n , \psi_K(l) \right\rangle \right|^2.
\end{align*}
We have used the elementary inequality $|a|^2 \leq 2 |a-b|^2 + 2|b|^2$ in the second line.  The third line is a consequence of previous estimates, nonnegativity of summands, and $K < 2^N$.  By expanding $\psi_K$ and performing some algebraic manipulations, we see that the above is equal to
$$
2 \epsilon + \frac{2}{K} \sum_{n \in F} \sum_{l = 0}^{2^N-1} \sum_{j=0}^{2^N-1} \sum_{k=0}^{2^N-1} \gamma_{j,N}^l \overline{\gamma_{k,N}^l} \left\langle \varphi_n ,  \chi_{_{\Gamma_{j,N}}} (U) \psi \right\rangle \overline{\left\langle \varphi_n , \chi_{_{\Gamma_{k,N}}} (U) \psi \right\rangle }.
$$
Summing over $l$ and $k$, this is equivalent to
\begin{align*}
    & 2 \epsilon + \frac{2}{K} \sum_{n \in F} \sum_{j=0}^{2^N-1} \sum_{k=0}^{2^N-1} 2^N \delta_{j,k} \left\langle \varphi_n ,  \chi_{_{\Gamma_{j,N}}} (U) \psi \right\rangle \overline{\left\langle \varphi_n ,  \chi_{_{\Gamma_{k,N}}} (U) \psi \right\rangle } \\
      = & 2 \epsilon + \frac{2}{K} \sum_{n \in F} \sum_{j=0}^{2^N-1}  2^N \left| \left\langle \varphi_n ,  \chi_{_{\Gamma_{j,N}}} (U) \psi \right\rangle \right|^2.
\end{align*}
By using the the relationship between $N,K$, and $\epsilon$, the above is at most
$$
        2 \epsilon + \frac{8 \pi}{\sqrt{\epsilon}} \sum_{n \in F} \sum_{j=0}^{2^N-1}  \left| \left\langle \varphi_n ,  \chi_{_{\Gamma_{j,N}}} (U) \psi \right\rangle \right|^2,
$$
which completes the proof of the lemma.
\end{proof}

\begin{prop} \label{gsb2}
Choose $d$ as in Propositions~\ref{UaH} and \ref{ac.not.zero}. Given $N \in \Z_+$ and $0 < \alpha < 1$, let $I_{N,\alpha} = \{ j : \mu( \Gamma_{j,N}) < 2^{-N\alpha} \}$, $A_{N,\alpha} = \bigcup_{j \in I_{N,\alpha}} \Gamma_{j,N}$, and $b_{N,\alpha} = \mu(A_{N,\alpha})$.  If $b_{N,\alpha} > 0$, then there exists a constant $M_{\alpha,d}$ depending only on $\alpha$ and $d$ such that for all $K$ with $ b_{N,\alpha} 2^{N-2} \leq 9 \pi K < b_{N,\alpha} 2^{N-1} $, one has
\[
\tilde{P}_{\out} \left( M_{\alpha,d} \left( b_{N,\alpha}^{3-\alpha} K^{\alpha} \right)^{1/d} ,K \right) \geq \frac{b_{N,\alpha}}{2}.
\]
\end{prop}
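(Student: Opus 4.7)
The plan is to isolate the spectrally localized piece of $\psi$ on the set $A_{N,\alpha}$, apply Lemma~\ref{gsb1} to that piece, and then transfer the resulting estimate to $\psi$ itself via a triangle inequality. Set $\psi_1 = \chi_{A_{N,\alpha}}(U)\psi$ and $\psi_2 = \psi - \psi_1$, so that $\|\psi_1\|^2 = b$ and $\|\psi_2\|^2 = 1-b$, where I write $b = b_{N,\alpha}$ throughout. Observe that $\chi_{\Gamma_{j,N}}(U)\psi_1 = \chi_{\Gamma_{j,N}}(U)\psi$ when $j \in I_{N,\alpha}$ and vanishes otherwise.

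I apply the proof of Lemma~\ref{gsb1} to $\psi_1$ (the argument goes through verbatim, with $\|\psi\|^2 = 1$ replaced by $\|\psi_1\|^2 = b$) with $F = \{n : |n| \leq R\}$ and $\epsilon = b^2/81$. With this choice the hypothesis $b\cdot 2^{N-2} \leq 9\pi K < b\cdot 2^{N-1}$ is precisely the relation $2^{N-2} \leq K\pi/\sqrt{\epsilon} < 2^{N-1}$ required by the lemma, and its conclusion reads
\[
\tilde{P}_{\inn}^{\psi_1}(R,K) \leq \frac{2b^3}{81} + \frac{72\pi}{b}\sum_{|n|\leq R}\sum_{j \in I_{N,\alpha}}\left|\left\langle \varphi_n, \chi_{\Gamma_{j,N}}(U)\psi\right\rangle\right|^2.
\]
To control the inner double sum I transport to $L^2(\partial\D,\dd\mu_\psi)$ via the spectral isomorphism $V$ and write $\langle \varphi_n, \chi_{\Gamma_{j,N}}(U)\psi\rangle = \int_{\Gamma_{j,N}}\overline{u_n}\,\dd\mu_\psi$ with $u_n = VP_\psi\varphi_n$. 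Cauchy--Schwarz combined with $\mu_\psi(\Gamma_{j,N}) < 2^{-N\alpha}$ for $j \in I_{N,\alpha}$ yields
\[
\sum_{j \in I_{N,\alpha}}\left|\left\langle \varphi_n, \chi_{\Gamma_{j,N}}(U)\psi\right\rangle\right|^2 \leq 2^{-N\alpha}\int_{A_{N,\alpha}}|u_n|^2\,\dd\mu_\psi \leq 2^{-N\alpha},
\]
and summing over $|n|\leq R$ (with $\#\{n : |n|\leq R\}\leq C_d R^d$) bounds the double sum by $C_d R^d 2^{-N\alpha}$.

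From $2^N > 18\pi K/b$ (the hypothesis) I replace $2^{-N\alpha}$ by a constant multiple of $(K/b)^{-\alpha}$ and then select $R = M_{\alpha,d}\bigl(b^{3-\alpha}K^\alpha\bigr)^{1/d}$ with $M_{\alpha,d}$ chosen so that each term in the upper bound on $\tilde{P}_{\inn}^{\psi_1}(R,K)$ is at most $b^2/32$; the term $2b^3/81$ automatically satisfies this since $b \leq 1$, and the other term gives exactly the displayed power scaling, which is where the exponent $3-\alpha$ emerges. Finally, because $\xi \mapsto \sqrt{\tilde{P}_{\inn}^\xi(R,K)}$ is a seminorm (it is an $\ell^2$-norm of a linear function of $\xi$), the triangle inequality yields
\[
\sqrt{\tilde{P}_{\inn}^\psi(R,K)} \leq \sqrt{\tilde{P}_{\inn}^{\psi_1}(R,K)} + \sqrt{\tilde{P}_{\inn}^{\psi_2}(R,K)} \leq \frac{b}{4} + \sqrt{1-b},
\]
and the elementary estimate $\sqrt{1-b/2}-\sqrt{1-b} = (b/2)/\bigl(\sqrt{1-b/2}+\sqrt{1-b}\bigr) \geq b/4$ gives $\tilde{P}_{\inn}^\psi(R,K) \leq 1 - b/2$, so $\tilde{P}_{\out}^\psi(R,K) \geq b/2$, as claimed.

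The main obstacle is the careful balancing of powers of $b$: the coupling $\sqrt\epsilon = b/9$ is forced by the dyadic constraint in Lemma~\ref{gsb1}, yet one must still produce an $O(b^2)$-level upper bound on $\tilde{P}_{\inn}^{\psi_1}$ (any weaker bound would be overwhelmed by the $\sqrt{1-b}$ contribution from $\psi_2$ in the triangle inequality) while simultaneously achieving the $b^{3-\alpha}$ scaling of the outside radius.
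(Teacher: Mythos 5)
Your argument is correct and follows essentially the same route as the paper's: isolate $\psi_1 = \chi_{A_{N,\alpha}}(U)\psi$, apply Lemma~\ref{gsb1} to it with $\sqrt{\epsilon}$ a fixed multiple of $b$ (your $\epsilon = b^2/81$ corresponds to the paper's implicit choice $\eta = 1/9$ in $\epsilon = (\eta b)^2$), bound the inner double sum via Cauchy--Schwarz and $\mu(\Gamma_{j,N}) < 2^{-N\alpha}$, and then transfer the smallness of $\tilde P_{\inn}^{\psi_1}$ to $\tilde P_{\inn}^{\psi}$ using a triangle inequality and $\|\psi_2\|^2 = 1-b$. The only difference is cosmetic: you package the final step as the seminorm triangle inequality for $\xi\mapsto\sqrt{\tilde P_{\inn}^\xi}$ (which in fact yields a slightly sharper cross term), whereas the paper expands $\|P_m U^l(\psi_N+\psi_N')\|^2$ directly.
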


\begin{proof}
Put $\psi_N = \chi_{A_{N,\alpha}}(U) \psi$. Evidently, we have
$$
\| \psi_N \|^2 = \langle \psi, \chi_{A_{N,\alpha}}(U) \psi \rangle = \mu(A_{N,\alpha}) = b_{N,\alpha}.
$$

Pick $\eta > 0$ with $ \eta < \frac{\sqrt{6}-2}{4} $ and put $\epsilon = \left( \eta b_{N,\alpha} \right)^2$.  Given $m \in \Z_+$, take $F_m = \{ n : |n| \leq m \}$. We note then that $ b_{N,\alpha} 2^{N-2} \leq 9 \pi K < b_{N,\alpha} 2^{N-1} $ is equivalent to $ 2^{N-2} \leq \frac{K \pi}{\sqrt{\epsilon}} < 2^{N-1} $. Thus, applying Lemma \ref{gsb1} to $\epsilon, F_m$ and $\psi_N$, we see that
$$
\frac{1}{K} \sum_{|n| \leq m} \sum_{l = 0}^{K-1} \left| \left\langle \varphi_n , U^l\psi_N \right\rangle \right|^2 \leq 2 \eta^2 b_{N,\alpha}^2 + \frac{8 \pi}{\eta b_{N,\alpha}} \sum_{|n| \leq m} \sum_{j = 0}^{2^N-1} \left| \left\langle \varphi_n, \chi_{_{\Gamma_{j,N}}} (U) \psi_N \right\rangle \right|^2.
$$

We can control the sum on the right hand side as follows:
\begin{align*}
\sum_{|n| \leq m} \sum_{j = 0}^{2^N-1} \left| \left\langle \varphi_n, \chi_{_{\Gamma_{j,N}}} (U) \psi_N \right\rangle \right|^2
   & = \sum_{|n| \leq m} \sum_{j \in I_{N,\alpha}} \left| \left\langle \varphi_n, \chi_{_{\Gamma_{j,N}}} (U) \psi \right\rangle \right|^2 \\
   & \leq \sum_{|n| \leq m} \sum_{j \in I_{N,\alpha}} \left\| \chi_{_{\Gamma_{j,N}}} (U) \varphi_n \right\|^2 \left\| \chi_{_{\Gamma_{j,N}}} (U) \psi \right\|^2  \\
   & < \sum_{|n| \leq m} \sum_{j \in I_{N,\alpha}} 2^{-N\alpha} \left\| \chi_{_{\Gamma_{j,N}}} (U) \varphi_n \right\|^2 \\
   & \leq \sum_{|n| \leq m} 2^{-N\alpha} \\
   & \leq C_d m^d 2^{-N\alpha}.
\end{align*}
The first line holds because $\psi_N = \chi_{A_{N,\alpha}}(U) \psi$.  The second line follows from Cauchy-Schwarz, the third by definition of $I_{N,\alpha}$, and the fourth from $\| \varphi_n \| = 1$.  In the fifth line, $C_d$ is a constant which only depends on $d$.

Now, take
$$
m = \left( \frac{1}{4 \pi C_d} \left( \eta b_{N,\alpha} \right)^3 2^{N \alpha} \right)^{1/d}.
$$
Substituting this value of $m$ into the above inequality yields
\begin{align*}
\frac{1}{K} \sum_{|n| \leq m} \sum_{l = 0}^{K-1} \left| \left\langle \varphi_n , U^l\psi_N \right\rangle \right|^2
& \leq 2  \eta^2 b_{N,\alpha}^2 + \frac{8 \pi}{\eta b_{N,\alpha}} C_d m^d 2^{-N\alpha} \\
 & = \left( 2 \eta b_{N,\alpha} \right)^2.
 \end{align*}

Now, let $P_m$ be the projection onto a ball of radius $m$, that is, $P_m = \sum_{|n| \leq m} \langle \cdot, \varphi_n \rangle \varphi_n$. With $\psi_N' = \psi - \psi_N$, we have
\begin{align*}
\tilde{P}_{\inn}(m,K)
  & = \frac{1}{K} \sum_{l = 0}^{K-1} \| P_m \psi(l) \|^2 \\
  & = \frac{1}{K} \sum_{l = 0}^{K-1} \| P_m U^l (\psi_N + \psi_N') \|^2 \\
  & \leq \frac{1}{K} \sum_{l = 0}^{K-1} \left( \| P_m U^l \psi_N  \|^2 + 2 \| P_m U^l \psi_N  \| \|P_m U^l \psi_N ' \| + \|P_m U^l \psi_N ' \|^2 \right) \\
  & \leq \left( 2 \eta b_{N,\alpha} \right)^2 + 4 \eta b_{N,\alpha} + \| \psi_N ' \|^2.
\end{align*}
In the final line, we have used the previous estimate, projectivity of $ P_m $, unitarity of $U$, $\| \psi_N' \| \leq 1$, and Cauchy-Schwarz.  Thus, we see that
\begin{align*}
\tilde{P}_{\out}(m,K)
   & = 1 - \tilde{P}_{\inn}(m,K) \\
   & \geq 1 - \left( 2 \eta b_{N,\alpha} \right)^2 - 4 \eta b_{N,\alpha} - \| \psi_N ' \|^2 \\
   & = b_{N,\alpha} - \left( 2 \eta b_{N,\alpha} \right)^2 - 4 \eta b_{N,\alpha} \\
   & > \frac{b_{N,\alpha}}{2}.
\end{align*}
The first line is trivial, the second follows from the estimate above, the third from orthogonality of $\psi_N $ and $\psi_N'$, and the final follows from $b_{N,\alpha} \leq 1$ and the bound on $\eta$. In particular, we may deduce that $\tilde{P}_{\out}(R,K) \geq b_{N,\alpha}/2$ whenever $R \leq m $. Recalling the  relationships between the variables, we have $ 2^N > \frac{18 \pi K}{b_{N,\alpha}} $, which yields
$$
m = \left( \frac{1}{4 \pi C_d} \left( \eta b_{N,\alpha} \right)^3 2^{N \alpha} \right)^{1/d}
>
 M_{\alpha,d} \left( K^{\alpha} b_{N,\alpha}^{3-\alpha} \right)^{1/d},
$$
with
$$
M_{\alpha,d} = \left( \frac{\eta^3 (18\pi)^{\alpha}}{4 \pi C_d} \right)^{1/d}.
$$
The proposition follows.
\end{proof}

Recall the definition of the $\alpha$-dimensional packing measure $p^\alpha$; compare \cite{F}.

\begin{defi}
Suppose $S \subseteq \partial \D$ and $\delta > 0$. A $\delta$-packing with centers in $S$ is a countable collection of mutually disjoint closed arcs, $\{I_j\}_{j \in \Z_+}$, each of which has length bounded by $\delta$ and center belonging to $S$. We set
$$
p^\alpha_\delta(S) = \sup \left\{ \sum_{j = 1}^\infty |I_j|^\alpha : \{I_j\}_{j \in \Z_+} \text{ is a $\delta$-packing with centers in } S \right\}
$$
and
$$
\tilde p^\alpha (S) = \lim_{\delta \to 0} p^\alpha_\delta(S) = \inf_{\delta > 0} p^\alpha_\delta(S).
$$
We also set
$$
p^\alpha(S) = \inf \left\{ \sum_{k=1}^\infty \tilde p^\alpha(S_k) : S = \bigcup_{k=1}^\infty S_k \right\}.
$$
\end{defi}

Note that $\tilde p^\alpha_\delta(S)$ decreases as $\delta$ decreases. This shows that the limit and the infimum above are indeed equal. Restricted to Borel sets $S$, $p^\alpha$ is a Borel measure. It is not hard to show that $p^\alpha(S) = 0$ whenever $p^{\alpha '} (S) < \infty$ and $0 \le \alpha' < \alpha$.

\begin{defi}
The packing dimension of a non-empty subset $S$ of $\partial \D$ is given by
\begin{align*}
\dim_{\Pa}(S) & = \sup \{ \alpha : p^\alpha(S) > 0 \} = \sup \{ \alpha : p^\alpha(S) = \infty \} \\ & = \inf \{ \alpha : p^\alpha(S) < \infty \} = \inf \{ \alpha : p^\alpha(S) = 0 \}.
\end{align*}
\end{defi}

\begin{defi}
Let $\mu$ be a finite Borel measure on $\partial \D$. The upper packing dimension of $\mu$ is given by
$$
\dim_{\Pa}^+(\mu) = \inf \{ \dim_{\Pa}(S) : S \subset \partial \D \text{ measurable}, \; \mu(S) = \mu(\partial \D) \}.
$$
\end{defi}

For the proof of the corollary below, the following characterization of the upper packing dimension is useful; compare Chapter~10 of \cite{F} and the appendix of \cite{GSB}.

\begin{prop}\label{p.packdim.char}
The upper packing dimension of $\mu$ is also given by
$$
\dim_{\mathrm{P}}^+(\mu)
=
\mu \esssup_{E \in \R}
\left(
 \limsup_{\varepsilon \to 0}
\frac{\log (\mu([E-\varepsilon,E+\varepsilon]))}{\log(\varepsilon)}
\right).
$$
\end{prop}

\begin{coro}
We have
\[
\tilde{\beta}_{\psi}^+(p) \geq \frac{\dim_{\Pa}^+(\mu_\psi^U)}{d}.
\]
\end{coro}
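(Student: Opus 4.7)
The plan is to mirror the proof of Corollary~\ref{c.corotwo}, with Proposition~\ref{gsb2} playing the role that Proposition~\ref{ac.not.zero} played there. If $\dim_{\Pa}^+(\mu_\psi^U) = 0$ there is nothing to prove; otherwise, fix $0 < \alpha < \alpha' < \dim_{\Pa}^+(\mu_\psi^U)$, aim to establish $\tilde\beta_\psi^+(p) \geq \alpha/d$ for this $\alpha$, and let $\alpha \uparrow \dim_{\Pa}^+(\mu_\psi^U)$ at the end.

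The first step is to exhibit an infinite set of scales $N$ along which $b_{N,\alpha}$ does not decay too fast. The natural input is the standard fact (see, e.g., \cite{F}) that $\dim_{\Pa}^+(\mu)$ equals the $\mu$-essential supremum of the upper local dimension $z \mapsto \limsup_{r \downarrow 0} \log \mu(B(z,r))/\log r$. Thus $T := \{z \in \partial \D : \limsup_{r \downarrow 0} \log \mu(B(z,r))/\log r > \alpha'\}$ has positive $\mu := \mu_\psi^U$-measure. For each $z \in T$ there are arbitrarily small $r$ with $\mu(B(z, r)) < r^{\alpha'}$; rounding each such $r$ to the dyadic scale $\pi/2^{N-1}$ and using the gap $\alpha < \alpha'$ to absorb the rounding constant for $N$ large, we deduce that $\mu(B(z, \pi/2^{N-1})) < 2^{-N\alpha}$ for infinitely many $N$. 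Since the arc $\Gamma_{j,N}$ containing $z$ sits inside $B(z, \pi/2^{N-1})$, this yields $z \in A_{N,\alpha}$ for infinitely many $N$, so $T \subseteq \limsup_N A_{N,\alpha}$; $\sigma$-subadditivity and continuity of $\mu$ from above then give $\sum_N b_{N,\alpha} = \infty$.

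Given this divergence, the root test extracts a subsequence $N_k \to \infty$ with $\log b_{N_k,\alpha}/N_k \to 0$. For each $k$ pick $K_k$ in the window $[b_{N_k,\alpha} 2^{N_k-2}/(9\pi),\ b_{N_k,\alpha} 2^{N_k-1}/(9\pi))$; Proposition~\ref{gsb2} combined with Remark~\ref{r.moments.pout} then yields
\[
\langle |X|^p_\psi \rangle(K_k) \geq C_{\alpha,d,p}\, b_{N_k,\alpha}^{1 + (3-\alpha)p/d}\, K_k^{\alpha p/d}.
\]
Since $\log K_k = N_k \log 2 + \log b_{N_k,\alpha} + O(1)$ and $\log b_{N_k,\alpha} = o(N_k)$, we have $\log b_{N_k,\alpha}/\log K_k \to 0$; taking logs of the displayed inequality, dividing by $p \log K_k$, and letting $k \to \infty$ eliminates the $b_{N_k,\alpha}$-dependent correction and leaves the desired lower bound $\alpha/d$.

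The main obstacle, I expect, is the conversion of the mere divergence $\sum_N b_{N,\alpha} = \infty$ into a subsequence along which the coefficient $b_{N_k,\alpha}^{1 + (3-\alpha)p/d}$ is $K_k^{-o(1)}$, so that it does not spoil the target power $K_k^{\alpha p/d}$; the root-test argument sketched above seems to be the cleanest way to handle this, and once it is in place the remainder follows the same template as the Hausdorff version in Corollary~\ref{c.corotwo}.
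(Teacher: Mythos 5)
Your proposal is correct and follows essentially the same route as the paper's proof: establish $\sum_N b_{N,\alpha} = \infty$, extract a subsequence of scales along which $b_{N,\alpha}$ does not decay too fast, feed this into Proposition~\ref{gsb2} together with Remark~\ref{r.moments.pout}, take logarithms, and let $\alpha \uparrow \dim_\Pa^+(\mu_\psi^U)$. Two small deviations are worth noting, and both are fine (arguably tidier). First, where the paper simply asserts, citing \cite{GSB}, that $\alpha < \dim_\Pa^+(\mu_\psi^U)$ gives $\mu(\limsup_N A_{N,\alpha}) > 0$, you fill in the details via the $\mu$-essential-supremum characterization of $\dim_\Pa^+$ in terms of upper local dimensions and the gap $\alpha < \alpha'$ to absorb the dyadic-rounding constants; this is exactly the content of the \cite{GSB} argument the paper invokes. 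Second, where the paper extracts the subsequence via the concrete bound $b_{N_j,\alpha} > N_j^{-2}$ and then shows $\log N_j / \log K_j \to 0$, you instead use the root test (together with $b_{N,\alpha} \le 1$) to get $\log b_{N_k,\alpha} = o(N_k)$ and hence $\log b_{N_k,\alpha}/\log K_k \to 0$ directly; your criterion is slightly weaker than the paper's but still sufficient, and it streamlines the final bookkeeping.
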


\begin{proof}
We argue as in \cite{GSB}. If $\dim_{\Pa}^+( \mu_{\psi}^U) = 0$, then there is nothing to prove, so assume given $0 \leq \alpha < \dim_{\Pa}^+( \mu_{\psi}^U) $. One easily verifies that
$$
E \in \limsup_{N \to \infty} A_{N,\alpha}
\iff
 \limsup_{\varepsilon \to 0}
\frac{\log (\mu([E-\varepsilon,E+\varepsilon]))}{\log(\varepsilon)}
>
\alpha.
$$
In particular, Proposition \ref{p.packdim.char} implies that $\mu \left( \limsup_{N \to \infty} A_{N,\alpha} \right) > 0$, which, by Borel-Cantelli, implies that $\sum_{N \in \Z_+} \mu( A_{N,\alpha} ) = \infty$.  Hence, there is a sequence $(N_j)_{j=1}^{\infty}$ of integers such that $b_{N_j} = \mu(A_{N_j,\alpha}) > N_j^{-2}$ (for if not, a simple comparison would imply that the divergent sum above converges). Lemma \ref{gsb2} then implies that
$$
\tilde{P}_{\out} \left( M_{\alpha,d} \left( b_{N_j}^{3-\alpha} K_j^{\alpha} \right)^{1/d} , K_j \right) \geq \frac{N_j^{-2}}{2}.
$$

Of course, we have chosen an increasing subsequence of sampling times $K_1 < K_2 < \cdots$ so that Proposition~\ref{gsb2} is relevant, that is, such that $ b_{N_j} 2^{N_j - 2} \leq 9 \pi K_j < b_{N_j} 2^{N_j-1}$. We can then make use of Remark~\ref{r.moments.pout} to see that
\begin{align*}
\left\langle |X|^p_{\psi} \right\rangle(K)
   & \geq M_{\alpha,d}^p \left( b_{N_j}^{3-\alpha} K_j^{\alpha} \right)^{p/d} \frac{N_j^{-2}}{2} \\
   & \geq \frac{1}{2} M_{\alpha,d}^p K_j^{\alpha p/d}  N_j^{(2\alpha-6)\frac{p}{d}-2}.
\end{align*}

We claim that $\tilde{\beta}_\psi^+(p) \geq \alpha/d$.  As a consequence of the above inequality, it suffices to prove that
$$
\lim_{j \to \infty} \frac{\log(N_j)}{ \log(K_j)} = 0.
$$
To that end, we begin by noticing that $ \frac{\log(N_j)}{\log(K_j)} $ is uniformly bounded above.  To see this, simply choose $j$ large enough that $ 2^{N_j} \geq 36 \pi N_j^3 $, and observe that one has $ K_j \geq \frac{b_{N_j,\alpha}}{36 \pi} 2^{N_j} \geq N_j $ for such $j$ $\left( \text{by using } b_{N_j,\alpha} \geq N_j^{-2} \right)$.  Thus, $ \frac{\log(N_j)}{ \log(K_j)} \leq 1 $ for sufficiently large $j$, from which the boundedness observation follows.

Now, let $\epsilon > 0 $ be given, and choose $j$ sufficiently large so that $\log(N_j) < \epsilon N_j$. We can take the logarithm of the relationship $b_{N_j} 2^{N_j -2} \leq 9 \pi K_j$, use  $ b_{N_j} > N_{j}^{-2}$ and rearrange to obtain
$$
\frac{N_j \log(2)}{\log(K_j)} \leq 1 + \frac{\log(36 \pi) + 2 \log(N_j)}{\log(K_j)}.
$$
Evidently, the expression on the right is uniformly bounded for all $j$ by a positive constant, say, $C >0$.  Thus, for $j$ chosen sufficiently large as above, we have
\begin{align*}
0 & \leq \frac{\log(N_j)}{\log(K_j)} \\
  & \leq \frac{\epsilon N_j}{\log(K_j)} \\
  & \leq \frac{C \epsilon}{\log(2)}.
\end{align*}
Thus, we obtain $\tilde{\beta}_{\psi}^{+}(p) \geq \frac{\alpha}{d}$. Since this holds for all $\alpha < \dim_{\Pa}^+(\mu_{\psi})$, the proposition follows.
\end{proof}

\section{CMV Matrices}\label{s.4}

In this section we consider the special case where the unitary operator is given by a CMV matrix. A CMV matrix is a semi-infinite matrix of the form
$$
\mathcal{C} = \begin{pmatrix}
{}& \bar\alpha_0 & \bar\alpha_1 \rho_0 & \rho_1
\rho_0
& 0 & 0 & \dots & {} \\
{}& \rho_0 & -\bar\alpha_1 \alpha_0 & -\rho_1
\alpha_0
& 0 & 0 & \dots & {} \\
{}& 0 & \bar\alpha_2 \rho_1 & -\bar\alpha_2 \alpha_1 &
\bar\alpha_3 \rho_2 & \rho_3 \rho_2 & \dots & {} \\
{}& 0 & \rho_2 \rho_1 & -\rho_2 \alpha_1 &
-\bar\alpha_3
\alpha_2 & -\rho_3 \alpha_2 & \dots & {} \\
{}& 0 & 0 & 0 & \bar\alpha_4 \rho_3 & -\bar\alpha_4
\alpha_3
& \dots & {} \\
{}& \dots & \dots & \dots & \dots & \dots & \dots & {}
\end{pmatrix},
$$
where $\alpha_n \in \D = \{ w \in \C : |w| < 1 \}$ and $\rho_n = (1-|\alpha_n|^2)^{1/2}$. $\mathcal{C}$ defines a unitary operator on $\ell^2(\Z_+)$.

These particular unitary operators play an important role in the theory of orthogonal polynomials on the unit circle as well as in the study of quantum walks in one dimension. Moreover they provide a canonical representation of general unitary operators in the following sense. Given any unitary operator $U$ in $\mathcal{H}$ and an initial state $\psi \in \mathcal{H}$, the evolution $U^n \psi$ takes place inside the spectral subspace $\mathcal{H}_\psi$ generated by $U$ and $\psi$. The action of $U$ on $\mathcal{H}_\psi$ is unitarily equivalent to multiplication by $z$ in $L^2(\partial \D, d\mu_\psi^U)$. Choosing the so-called CMV basis of $L^2(\partial \D, d\mu_\psi^U)$, the matrix representation of the latter operator with respect to this basis is then given by a CMV matrix.

Sometimes it makes sense to consider extended CMV matrices, acting on $\ell^2(\Z)$. They have the exact same form, but are two-sided infinite and may be put in one-to-one correspondence with two-sided infinite sequences $\{ \alpha_n \}_{n \in \Z} \subset \D$. They are typically denoted by $\mathcal{E}$. Hence, such an extended CMV matrix, corresponding to a sequence $\{ \alpha_n \}_{n \in \Z} \subset \D$, takes the form
$$
\mathcal{E} = \begin{pmatrix}
{}& \dots & \dots & \dots & \dots & \dots & \dots & \dots & \dots & {} \\
{}& \dots & -\bar\alpha_{-3} \alpha_{-4} & -\rho_{-3} \alpha_{-4} & 0 & 0 & 0 & 0 & \dots & {} \\
{}& \dots & \bar\alpha_{-2} \rho_{-3} & -\bar\alpha_{-2} \alpha_{-3} & \bar\alpha_{-1} \rho_{-2} & \rho_{-1} \rho_{-2} & 0 & 0 & \dots & {} \\
{}& \dots & \rho_{-2} \rho_{-3} & -\rho_{-2} \alpha_{-3} & -\bar\alpha_{-1} \alpha_{-2} & -\rho_{-1} \alpha_{-2} & 0 & 0 & \dots & {} \\
{}& \dots & 0 & 0 & \bar\alpha_0 \rho_{-1} & -\bar\alpha_0 \alpha_{-1} & \bar\alpha_1 \rho_0 & \rho_1 \rho_0 & \dots & {} \\
{}& \dots & 0 & 0 & \rho_0 \rho_{-1} & -\rho_0 \alpha_{-1} & -\bar\alpha_1 \alpha_0 & -\rho_1 \alpha_0 & \dots & {} \\
{}& \dots & 0 & 0 & 0 & 0 & \bar\alpha_2 \rho_1 & -\bar\alpha_2 \alpha_1 & \dots & {} \\
{}& \dots & \dots & \dots & \dots & \dots & \dots & \dots & \dots & {}
\end{pmatrix}.
$$
For example, when the $\alpha_n$'s are obtained by sampling along the orbit of an invertible ergodic transformation, the general theory of such operators naturally considers the two-sided situation. Special cases of this scenario that are of great interest include the periodic case, the almost periodic case, and the random case.

\subsection{Spectral Regularity via Subordinacy Theory}\label{ss.subordinacy}

Here we describe conditions on a CMV matrix that imply that some of the dynamical results presented in the previous section are applicable. That is, we state criteria for local and global regularity of spectral measures that are effective in the sense that for any given CMV matrix, there is clear path toward establishing these sufficient conditions since they are phrased in terms of solution estimates, which can be obtained in a variety of ways from the coefficients of the given matrix.

We begin with the half-line case. Suppose a CMV matrix $\mathcal{C}$ with Verblunsky coefficients $\{ \alpha_n \}_{n \ge 0} \subset \D$ is given. The associated probability measure $\mu$ on the unit circle is given by the spectral measure associated with the unitary operator $\mathcal{C}$ on $\ell^2(\Z_+)$ and the unit vector $\delta_0 \in \ell^2(\Z_+)$. Recall that the Carath\'eodory function $F$ associated with $\mu$ is given by
$$
F(z) = \int_{\partial \D} \frac{e^{i\theta} + z}{e^{i\theta} - z} \, d\mu(e^{i\theta}).
$$

The transfer matrices associated with these Verblunsky coefficients are defined as follows. For $z \in \partial \D$, $\alpha \in \D$, and $\rho = (1 - |\alpha|^2)^{1/2}$, write
$$
T(z,\alpha) = \rho^{-1} \begin{pmatrix} z & - \bar{\alpha} \\ - \alpha z & 1 \end{pmatrix}.
$$
Then, for $n \ge 1$, let
$$
T_n(z) = T(z,\alpha_{n-1}) \cdots T(z,\alpha_{0}).
$$
We also set $T_0(z) = I$.

%More generally, we write $T_{n \leftarrow }(z) = I$ and, for $n > m \ge 0$,
%$$
%T_{n \leftarrow m}(z) = T(z,\alpha_{n-1}) \cdots T(z,\alpha_{m}),
%$$
%so that in particular $T_n(z) = T_{n \leftarrow 0}(z)$ for $n \ge 0$.

The orthonormal polynomials of the first and second kind are defined by
$$
\begin{pmatrix} \varphi_n(z) \\ \varphi_n^*(z) \end{pmatrix} = T_n(z) \begin{pmatrix} 1 \\ 1 \end{pmatrix}
$$
and
$$
\begin{pmatrix} \psi_n(z) \\ \psi_n^*(z) \end{pmatrix} = T_n(z) \begin{pmatrix} 1 \\ -1 \end{pmatrix},
$$
respectively; compare \cite[Proposition~3.2.1]{S1}.

For a sequence $a_0, a_1, a_2, \ldots$ of complex numbers and $L \in (0,\infty)$, let
$$
\|a\|_L = \sum_{n = 0}^{\lfloor L \rfloor} |a_n|^2 + (L - \lfloor L \rfloor) |a_{\lfloor L \rfloor + 1}|^2.
$$
That is, $\| \cdot \|_L$ is a local $\ell^2$ norm for integer values of $L$, and $\| \cdot \|^2_L$ is linearly interpolated in between.

The following result is \cite[Theorem~10.8.2]{S2}:

\begin{theorem}[OPUC Version of the Jitomirskaya-Last Inequality]\label{t.jitomirskayalast}
Suppose $z \in \partial \D$ and $r \in [0,1)$. Define $L(r)$ to be the unique solution of
$$
(1 - r) \|\varphi(z)\|_{L(r)} \|\psi(z)\|_{L(r)} = \sqrt{2}.
$$
Then, for some universal constant $A \in (1,\infty)$, we have
$$
A^{-1} \frac{\|\psi(z)\|_{L(r)}}{\|\varphi(z)\|_{L(r)}} \le |F(rz)| \le A \frac{\|\psi(z)\|_{L(r)}}{\|\varphi(z)\|_{L(r)}}.
$$
\end{theorem}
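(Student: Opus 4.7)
The plan is to adapt the original Jitomirskaya--Last proof from the half-line Schr\"odinger setting to the OPUC framework, replacing the Weyl--Titchmarsh $m$-function by $F$ and the Dirichlet/Neumann solutions by the orthonormal polynomials of the first and second kind, $\varphi_n(z)$ and $\psi_n(z)$. The central object is a Weyl-type $\ell^2$ ``solution'' at the interior spectral parameter $rz \in \D$, built from the circle-boundary polynomials as
$$
u_n \;=\; \psi_n(z) - F(rz)\,\varphi_n(z)
$$
(with the sign convention fixed by \cite[Proposition~3.2.1]{S1}), together with the Plancherel-type estimate
$$
\sum_{n \ge 0} (1-r)\,|u_n|^2 \;\lesssim\; |F(rz)|.
$$

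Given this estimate, the two-sided bound is elementary. Restricting the sum on the left to $n \le L(r)$ yields $\|u\|_{L(r)} \lesssim (|F(rz)|/(1-r))^{1/2}$. Applying the triangle and reverse triangle inequalities to the identity $\psi_n(z) = u_n + F(rz)\,\varphi_n(z)$ at the level of local $\ell^2$-norms then gives
$$
\left| \|\psi(z)\|_{L(r)} - |F(rz)|\,\|\varphi(z)\|_{L(r)} \right| \;\le\; \|u\|_{L(r)} \;\lesssim\; \sqrt{\tfrac{|F(rz)|}{1-r}}.
$$
The defining equation $(1-r)\|\varphi(z)\|_{L(r)}\|\psi(z)\|_{L(r)} = \sqrt 2$ is tailored exactly so that the right-hand side is comparable to $\|\psi(z)\|_{L(r)}^{1/2}\bigl(|F(rz)|\,\|\varphi(z)\|_{L(r)}\bigr)^{1/2}$. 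Dividing through by $\|\varphi(z)\|_{L(r)}$ and solving the resulting quadratic inequality in $|F(rz)|^{1/2}$ forces $|F(rz)|$ to lie within a universal multiplicative constant $A$ of $\|\psi(z)\|_{L(r)}/\|\varphi(z)\|_{L(r)}$, producing both the upper and the lower bound in a single step.

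The principal obstacle I anticipate is the Plancherel-type bound on $u_n$. In the self-adjoint case, the analogous step rests on the identity $\Im m(E + i\varepsilon) = \varepsilon \|(H-E-i\varepsilon)^{-1}\delta_0\|^2$; the OPUC counterpart involves relating $\Re F(rz)$ to the $\ell^2$ norm of $(\mathcal{C}-rz)^{-1}\delta_0$ and then expanding the resolvent matrix elements as linear combinations of $\varphi_n$ and $\psi_n$. Lemma~\ref{l.parseval}, applied with $\psi = \delta_0$, is well suited to this: it already rewrites exponentially-weighted time sums as circle integrals of resolvent matrix elements, and a standard Poisson-kernel comparison then turns that circle average into the desired pointwise bound at $rz$. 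A secondary bookkeeping issue is that the CMV transfer matrices $T_n(z)$ act naturally on the star-reversed basis $(\varphi_n, \varphi_n^*)$ rather than on $(\varphi_n, \psi_n)$; since the change-of-basis between these pairs has constant nonzero determinant, only universal constants are lost, but the conversion must be carried through carefully so as not to introduce any spurious $(1-r)$-dependence.
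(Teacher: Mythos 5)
The paper does not actually prove this theorem---it imports it verbatim from Simon \cite[Theorem~10.8.2]{S2}---so there is no in-paper proof to compare against, and I am assessing your sketch on its own merits.

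Your ``algebraic completion'' is sound. Granting a local bound $\|u\|_{L(r)} \lesssim (|F(rz)|/(1-r))^{1/2}$ for $u_n = \psi_n(z) - F(rz)\varphi_n(z)$, the triangle inequality, the defining relation $(1-r)\|\varphi\|_{L(r)}\|\psi\|_{L(r)}=\sqrt2$, and the quadratic-inequality manipulation do indeed force $|F(rz)| \asymp \|\psi\|_{L(r)}/\|\varphi\|_{L(r)}$ with a universal constant, exactly as in the Schr\"odinger case.

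However, there is a genuine gap in how you propose to obtain the bound on $u$, and it is the gap precisely where the real work of the Jitomirskaya--Last argument lives. As stated, the displayed ``Plancherel-type estimate'' $\sum_{n\ge0}(1-r)|u_n|^2 \lesssim |F(rz)|$ is false: $u_n$ is a linear combination of the polynomials \emph{at the boundary point} $z\in\partial\D$, which is generically not square-summable, so the full sum on the left is usually infinite. What is true is the Poisson-kernel identity $\Re F(rz) = (1-r^2)\,\|(\mathcal{C}-rz)^{-1}\delta_0\|_{\ell^2}^2$, a direct consequence of the spectral theorem (you do not need Lemma~\ref{l.parseval} for this; that lemma is a dynamical Parseval identity on a circle of radius $>1$ and is not the right tool here). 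This gives an $\ell^2$ bound on the genuine Weyl solution at the \emph{interior} parameter $rz$, whose entries are built from $\varphi_n(rz), \psi_n(rz)$---not from $\varphi_n(z), \psi_n(z)$. The essential step you have not identified is the variation-of-parameters (perturbation) estimate that compares the transfer matrices $T_n(rz)$ and $T_n(z)$---i.e.\ the OPUC analog of \cite[Proposition~10.8.1]{S2}, reproduced in this manuscript as identity \eqref{e.energyvariation1}---and shows that the Weyl solution at $rz$ and the combination $\psi_n(z)-F(rz)\varphi_n(z)$ are comparable \emph{precisely on the window $n\le L(r)$}, because the error in the Duhamel expansion is controlled by $(1-r)\|\varphi(z)\|_L\|\psi(z)\|_L$, which the definition of $L(r)$ keeps bounded by $\sqrt2$. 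Without this comparison, your local $\ell^2$ bound on $u$ has no derivation, and the choice of $L(r)$ plays no role in your argument prior to the final quadratic manipulation---which is a symptom that the step you flagged as merely an ``obstacle'' is actually the core of the proof. Your secondary remark about the $(\varphi_n,\varphi_n^*)$ versus $(\varphi_n,\psi_n)$ bookkeeping is legitimate but minor by comparison.
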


Recall that by Proposition~\ref{p.djlsconnection}, the divergence rate of $|F(rz)|$ is connected to the $\alpha$-derivative of $\mu$ at $z$. Combining this with Theorem~\ref{t.jitomirskayalast} one arrives at the following equivalence, which is \cite[Theorem~10.8.5]{S2}.

\begin{coro}\label{c.jitolast}
Given $\alpha \in (0,1)$, let $\beta = \frac{\alpha}{2-\alpha}$. Then, for $z_0 \in \partial \D$, we have
$$
D^\alpha_\mu(z_0) = \infty \; \Leftrightarrow \; \liminf_{L \to \infty} \frac{\|\varphi(z_0)\|_L}{\|\psi(z_0)\|_L^\beta} = 0.
$$
\end{coro}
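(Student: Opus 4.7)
The plan is to chain together Proposition~\ref{p.djlsconnection} and Theorem~\ref{t.jitomirskayalast} and then solve the resulting boundary-behavior condition for the ratio $\|\varphi\|_L/\|\psi\|_L^\beta$. By Proposition~\ref{p.djlsconnection},
$$
D^\alpha_\mu(z_0) = \infty \;\Leftrightarrow\; \limsup_{r \uparrow 1} (1-r)^{1-\alpha}|F(rz_0)| = \infty,
$$
so the task reduces to rewriting the left-hand quantity in terms of the solution norms.

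First, by the Jitomirskaya--Last inequality (Theorem~\ref{t.jitomirskayalast}), one has $|F(rz_0)| \asymp \|\psi(z_0)\|_{L(r)}/\|\varphi(z_0)\|_{L(r)}$, where $L(r)$ is defined by $(1-r)\|\varphi(z_0)\|_{L(r)}\|\psi(z_0)\|_{L(r)} = \sqrt 2$. Using the defining equation to substitute for $(1-r)$, a direct computation gives
$$
(1-r)^{1-\alpha}|F(rz_0)| \asymp \frac{2^{(1-\alpha)/2}\,\|\psi(z_0)\|_{L(r)}^{\alpha}}{\|\varphi(z_0)\|_{L(r)}^{2-\alpha}}.
$$
Taking the $(2-\alpha)$-th root, the condition $\limsup_{r \uparrow 1} (1-r)^{1-\alpha}|F(rz_0)| = \infty$ is equivalent to
$$
\limsup_{r \uparrow 1} \frac{\|\psi(z_0)\|_{L(r)}^{\beta}}{\|\varphi(z_0)\|_{L(r)}} = \infty, \qquad \beta = \tfrac{\alpha}{2-\alpha},
$$
and, passing to reciprocals, to $\liminf_{r \uparrow 1} \|\varphi(z_0)\|_{L(r)}/\|\psi(z_0)\|_{L(r)}^{\beta} = 0$.

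The last step is to replace the parameter $r$ by the parameter $L$ in the liminf. For this I would use that $L \mapsto \|\varphi(z_0)\|_L$ and $L \mapsto \|\psi(z_0)\|_L$ are continuous and nondecreasing (by construction of these piecewise-linear interpolations), so that $r \mapsto L(r)$ is continuous in $r$ on $[0,1)$ and $L(r) \to \infty$ as $r \uparrow 1$ (since $(1-r) \to 0$ forces the product $\|\varphi\|_{L(r)}\|\psi\|_{L(r)}$ to diverge). Continuity and surjectivity onto some half-line $[L_0,\infty)$ imply that
$$
\liminf_{r \uparrow 1} \frac{\|\varphi(z_0)\|_{L(r)}}{\|\psi(z_0)\|_{L(r)}^\beta} = \liminf_{L \to \infty} \frac{\|\varphi(z_0)\|_L}{\|\psi(z_0)\|_L^\beta},
$$
which finishes the equivalence.

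The main obstacle, such as it is, is a soft one: establishing that sweeping $r$ up to $1$ is equivalent to sweeping $L$ to infinity for the purposes of the liminf. This needs the monotonicity/continuity of $\|\varphi\|_L, \|\psi\|_L$ in $L$ built into the definition of $\|\cdot\|_L$, together with the observation that any sufficiently large $L$ arises as $L(r)$ for some $r$. Once this is in place the rest is just the algebraic manipulation using the Jitomirskaya--Last bound and the normalization $(1-r)\|\varphi\|_{L(r)}\|\psi\|_{L(r)} = \sqrt 2$ that produces the exponent $\beta = \alpha/(2-\alpha)$.
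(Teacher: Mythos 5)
Your proof is correct and follows exactly the route the paper indicates: it explicitly states that the corollary is obtained by combining Proposition~\ref{p.djlsconnection} with Theorem~\ref{t.jitomirskayalast} (citing \cite[Theorem~10.8.5]{S2} for the result), and your algebraic substitution of $(1-r) = \sqrt{2}\big/\big(\|\varphi\|_{L(r)}\|\psi\|_{L(r)}\big)$ together with the reparametrization from $r$ to $L$ is precisely the calculation needed to carry that out.
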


This result has the following immediate consequence:

\begin{coro}\label{c.jitolast2}
Suppose that for $z_0 \in \partial \D$, we have
$$
\|\varphi(z_0)\|_L \gtrsim L^{\gamma_1} , \quad \|\psi(z_0)\|_L \lesssim L^{\gamma_2}
$$
for $L \ge 1$, where $0 < \gamma_1 < \gamma_2 < \infty$. Then, with
$$
\alpha = \frac{2\gamma_1}{\gamma_1 + \gamma_2},
$$
we have
$$
D^\alpha_\mu(z_0) < \infty.
$$
In particular, the restriction of $\mu$ to the set
$$
P(\gamma_1, \gamma_2) = \{ z \in \partial \D : \|\varphi(z)\|_L \gtrsim L^{\gamma_1} , \; \|\psi(z)\|_L \lesssim L^{\gamma_2} \}
$$
{\rm (}with implicit constants that may depend on $z${\rm )} is $\alpha$-continuous for this choice of $\alpha$.
\end{coro}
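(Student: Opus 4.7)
The plan is to apply Corollary~\ref{c.jitolast} (the contrapositive direction) to the given hypothesis and then invoke the Rogers--Taylor decomposition (Theorem~\ref{t.rogerstaylor}) to translate the pointwise statement on $D^\alpha_\mu$ into the $\alpha$-continuity conclusion on the set $P(\gamma_1,\gamma_2)$.

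First, I would do the algebraic bookkeeping tying the exponents together. With the definition $\beta = \alpha/(2-\alpha)$ appearing in Corollary~\ref{c.jitolast}, substituting $\alpha = 2\gamma_1/(\gamma_1+\gamma_2)$ yields
\[
2 - \alpha = \frac{2\gamma_2}{\gamma_1+\gamma_2}, \qquad \beta = \frac{\gamma_1}{\gamma_2}.
\]
The upper bound $\|\psi(z_0)\|_L \lesssim L^{\gamma_2}$ then gives $\|\psi(z_0)\|_L^\beta \lesssim L^{\gamma_1}$, and combining with the lower bound $\|\varphi(z_0)\|_L \gtrsim L^{\gamma_1}$ produces
\[
\frac{\|\varphi(z_0)\|_L}{\|\psi(z_0)\|_L^\beta} \gtrsim 1 \qquad \text{for all } L \geq 1.
\]
In particular, $\liminf_{L\to\infty} \|\varphi(z_0)\|_L / \|\psi(z_0)\|_L^\beta > 0$. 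By the contrapositive of Corollary~\ref{c.jitolast}, this rules out $D^\alpha_\mu(z_0) = \infty$ and therefore $D^\alpha_\mu(z_0) < \infty$, as claimed.

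For the $\alpha$-continuity assertion, note that the argument above applies verbatim at every $z \in P(\gamma_1,\gamma_2)$ (the implicit constants are allowed to depend on $z$, which only affects the $\gtrsim 1$ constant, not the divergence behaviour). Hence $D^\alpha_\mu(z) < \infty$ for every $z \in P(\gamma_1,\gamma_2)$, so $P(\gamma_1,\gamma_2) \subseteq \partial \D \setminus S_\alpha$ with $S_\alpha$ as in Theorem~\ref{t.rogerstaylor}. Thus the restriction $\mu|_{P(\gamma_1,\gamma_2)}$ is dominated by $\mu_{\alpha c} = \mu|_{\partial \D \setminus S_\alpha}$, and since $\mu_{\alpha c}$ gives zero weight to sets of zero $\alpha$-dimensional Hausdorff measure, the same holds for $\mu|_{P(\gamma_1,\gamma_2)}$. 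That is, $\mu|_{P(\gamma_1,\gamma_2)}$ is $\alpha$-continuous.

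There is no genuine obstacle here; the proof is essentially a one-line computation of $\beta$ followed by an application of the already-established equivalence in Corollary~\ref{c.jitolast}, with the second assertion being a direct consequence of the characterization of $S_\alpha$ in Theorem~\ref{t.rogerstaylor}. The only point requiring a little care is that the implicit constants in the hypothesis are allowed to depend on the point $z$, but this is harmless for the $\liminf$ computation and hence for concluding $\alpha$-continuity pointwise on $P(\gamma_1,\gamma_2)$.
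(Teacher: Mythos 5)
Your proof is correct and follows essentially the same route as the paper's: verify that the power-law hypotheses force $\liminf_{L\to\infty}\|\varphi(z_0)\|_L/\|\psi(z_0)\|_L^\beta > 0$ and invoke Corollary~\ref{c.jitolast}. The only cosmetic difference is that you simplify $\beta$ to $\gamma_1/\gamma_2$ and compare the norms at exponent $\beta$ directly, whereas the paper compares $\|\varphi\|_L^{2-\alpha}/\|\psi\|_L^\alpha$ first and then passes to the $\beta$ formulation; these are equivalent up to raising to the power $2-\alpha$. Your explicit invocation of Theorem~\ref{t.rogerstaylor} to pass from the pointwise bound $D^\alpha_\mu < \infty$ on $P(\gamma_1,\gamma_2)$ to the $\alpha$-continuity of the restriction is the step the paper leaves implicit, and it is stated correctly.
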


\begin{proof}
We have
$$
\frac{\|\varphi(z_0)\|_L^{2-\alpha}}{\|\psi(z_0)\|_L^\alpha} \gtrsim \frac{L^{\gamma_1(2-\alpha)}}{L^{\gamma_2 \alpha}} = L^{-\alpha(\gamma_1 + \gamma_2) + 2 \gamma_1} = L^0 = 1.
$$
This shows that
$$
\liminf_{L \to \infty} \frac{\|\varphi(z_0)\|_L^{2-\alpha}}{\|\psi(z_0)\|_L^\alpha} > 0,
$$
which in turn implies
$$
\liminf_{L \to \infty} \frac{\|\varphi(z_0)\|_L}{\|\psi(z_0)\|_L^\beta} > 0.
$$
The result therefore follows from Corollary~\ref{c.jitolast}.
\end{proof}

\bigskip

Let us now turn to the whole-line case and consider extended CMV matrices $\mathcal{E}$, determined by a two-sided infinite sequence of coefficients $\{ \alpha_n \}_{n \in \Z} \subset \D$. There is a close analog of Corollary~\ref{c.jitolast2}. Namely, suitable power-law estimates imply continuity properties of spectral measures. In fact, it suffices to have such power-law estimates on one half-line, say the right half-line for definiteness. However, these estimates need to hold ``uniformly in the boundary condition.'' That is, one has to consider all vector-valued sequences of the form
\begin{equation}\label{e.tmequ}
\begin{pmatrix} \xi_{n} \\ \zeta_{n} \end{pmatrix} = T_n(z) \begin{pmatrix} \xi_0 \\ \zeta_0 \end{pmatrix},
\end{equation}
where
\begin{equation}\label{e.tmequnorm}
|\xi_0| = |\zeta_0| = 1.
\end{equation}
The following result was shown in \cite{MO}. It is an adaptation of a result shown by Damanik, Killip, and Lenz in the Schr\"odinger context \cite{DKL}.

\begin{prop}\label{p.dklest}
Suppose that for $z \in \partial \D$, there are constants $0 < \gamma_1(z) < \gamma_2(z) < \infty$ and $0 < C_1(z), C_2(z) < \infty$ so that
$$
C_1(z) L^{\gamma_1(z)} \leq \|\xi\|_L \leq C_2(z) L^{\gamma_2(z)}, \quad L \ge 1
$$
for every solution of \eqref{e.tmequ} that is normalized in the sense of \eqref{e.tmequnorm}. Then, for every spectral measure $\mu$ of $\mathcal{E}$, we have $D^\alpha_\mu(z) < \infty$, where $\alpha = \frac{2\gamma_1(z)}{\gamma_1(z) + \gamma_2(z)}$.

In particular, if $S \subset \partial \D$ is a Borel set such that there are constants $0 < \gamma_1 < \gamma_2 < \infty$ and, for each $z \in S$, there are constants $0 < C_1(z), C_2(z) < \infty$ so that
$$
C_1(z) L^{\gamma_1} \leq \|\xi\|_L \leq C_2(z) L^{\gamma_2}, \quad L \ge 1
$$
for every $z \in S$ and every solution of \eqref{e.tmequ} that is normalized in the sense of \eqref{e.tmequnorm}, then the restriction of every spectral measure of $\mathcal{E}$ to $S$ is purely $\frac{2\gamma_1}{\gamma_1 + \gamma_2}$-continuous, that is, it gives zero weight to sets of zero $h^{\frac{2\gamma_1}{\gamma_1 + \gamma_2}}$ measure.
\end{prop}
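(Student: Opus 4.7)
My plan is to reduce the whole-line statement to the half-line setting, where Corollary~\ref{c.jitolast2} is available, and then apply Rogers-Taylor (Theorem~\ref{t.rogerstaylor}) to upgrade pointwise control of the $\alpha$-derivative to $\alpha$-continuity. Throughout, fix $z$ satisfying the hypothesis and let $\mu$ be an arbitrary spectral measure of $\mathcal{E}$ with Carath\'eodory function $F$. By Proposition~\ref{p.djlsconnection}, showing $D^\alpha_\mu(z) < \infty$ is equivalent to establishing $|F(rz)| \lesssim (1-r)^{\alpha-1}$ as $r \uparrow 1$, where $\alpha = \frac{2\gamma_1(z)}{\gamma_1(z)+\gamma_2(z)}$.

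The central construction is to decouple $\mathcal{E}$ at position $0$. Imposing a phase $\beta \in \partial \D$ at the cut produces a direct sum $\mathcal{C}_+^\beta \oplus \mathcal{C}_-^\beta$ of half-line CMV operators whose resolvent differs from that of $\mathcal{E}$ by a finite-rank perturbation, yielding a rational expression for $F$ in terms of the half-line Carath\'eodory functions $F_\pm^\beta$ and the parameter $\beta$. Since the transfer matrices $T_n(z)$ for $n \ge 1$ do not depend on $\beta$, varying $\beta$ precisely parametrizes the set of admissible initial conditions of \eqref{e.tmequ} at $n=0$. In particular, for every $\beta$ the orthonormal polynomials $\varphi^\beta, \psi^\beta$ of $\mathcal{C}_+^\beta$ are among the normalized solutions of \eqref{e.tmequ}-\eqref{e.tmequnorm}, so the hypothesis forces $\|\varphi^\beta(z)\|_L \gtrsim L^{\gamma_1(z)}$ and $\|\psi^\beta(z)\|_L \lesssim L^{\gamma_2(z)}$; Corollary~\ref{c.jitolast2} then yields $|F_+^\beta(rz)| \lesssim (1-r)^{\alpha-1}$ uniformly in $\beta$.

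To transfer this uniform right half-line bound to the whole-line $F$, I would tune $\beta$ so that the denominator in the resolvent identity stays bounded away from zero at $rz$. Geometrically, varying $\beta$ sweeps out the Weyl disk of the unhypothesized left problem, and off a thin exceptional set it is possible to choose $\beta$ so that $F_-^\beta(rz)$ avoids the critical value where it would cancel against $F_+^\beta(rz)$. This produces $|F(rz)| \lesssim (1-r)^{\alpha-1}$ and hence $D^\alpha_\mu(z) < \infty$ via Proposition~\ref{p.djlsconnection}, establishing the first assertion.

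For the second assertion, let $S$ and $\gamma_1 < \gamma_2$ be as stated, set $\alpha = \frac{2\gamma_1}{\gamma_1+\gamma_2}$, and apply the first part at each $z \in S$. In the notation of Theorem~\ref{t.rogerstaylor}, $S$ is disjoint from $S_\alpha$, so $\mu|_S$ is a restriction of $\mu_{\alpha c}$ and assigns zero mass to every Borel set of zero $h^\alpha$-measure. The hardest step is the $\beta$-tuning in the previous paragraph: one must check that the left half-line enters the resolvent identity only through a quantity that can be controlled by a judicious choice of boundary condition, so that the one-sided hypothesis still closes the argument. This is the distinctive one-sided feature of the DKL strategy translated to the OPUC setting in \cite{MO}, and it is precisely where the assumption about \emph{every} normalized solution on the right, rather than a single specific one, is essential.
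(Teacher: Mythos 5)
The paper does not prove this proposition; it cites \cite{MO}, which adapts the Damanik--Killip--Lenz argument \cite{DKL} from the Schr\"odinger to the CMV setting. Your general scaffolding follows that strategy, but the step you yourself flag as hardest --- choosing $\beta$ so that the denominator in the resolvent identity ``stays bounded away from zero at $rz$'' --- is a genuine gap and is not how the argument actually closes. As written, the claim is heuristic: the ``thin exceptional set'' of bad $\beta$'s depends on $r$, so no fixed boundary phase works along $r \uparrow 1$, and a moving choice $\beta = \beta(r)$ would force you to keep the bound $|F_+^{\beta}(rz)| \lesssim (1-r)^{\alpha-1}$ and the lower bound on the denominator simultaneously uniform in a parameter whose dependence on $r$ you have not controlled. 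Varying $\beta$ also changes \emph{both} half-line Carath\'eodory functions, so ``avoids the critical value where it would cancel against $F_+^{\beta}$'' conceals precisely the cancellation that must be ruled out.

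The missing idea is positivity, not tuning. When the relevant whole-line Carath\'eodory/Weyl quantity is expressed in terms of the two half-line ones, the denominator has real part (in the Herglotz picture, imaginary part) given by a \emph{sum} of two nonnegative contributions, one per half-line. One therefore bounds the modulus of the whole-line quantity above by the reciprocal of the right-half-line contribution alone and simply discards the left one --- no exceptional set arises and no choice of $\beta$ is needed. The hypothesis that \emph{every} normalized solution of \eqref{e.tmequ}--\eqref{e.tmequnorm} obeys the two-sided power law is what lets the Jitomirskaya--Last machinery (Theorem~\ref{t.jitomirskayalast}, Corollary~\ref{c.jitolast2}) control this right-half-line contribution regardless of the boundary phase created at the cut. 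Finally, a smaller gap: you fix one spectral measure $\mu$, but the decoupling formula is tied to a specific cut vector, whereas the conclusion is asserted for \emph{every} spectral measure of $\mathcal{E}$. One should add that every spectral measure of $\mathcal{E}$ is absolutely continuous with respect to a canonical maximal one (e.g.\ $\mu_{\delta_0} + \mu_{\delta_1}$), so finiteness of $D^\alpha_\mu(z)$ for the latter transfers pointwise to the former.
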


\subsection{Quantum Walks on the Line}

Let us recall the standard quantum walk formalism. The Hilbert space is given by $\mathcal{H} = \ell^2(\Z) \otimes \C^2$. A basis is given by the elementary tensors $|n \rangle \otimes | \! \uparrow \rangle$, $|n \rangle \otimes | \! \downarrow \rangle$, $n \in \Z$. A time-homogeneous quantum walk scenario is given as soon as coins
\begin{equation}\label{e.timehomocoins}
C_{n} = \begin{pmatrix} c^{11}_{n} & c^{12}_{n} \\ c^{21}_{n} & c^{22}_{n} \end{pmatrix} \in U(2), \quad n \in \Z,
\end{equation}
are specified. As one passes from time $t$ to time $t+1$, the update rule of the quantum walk is as follows,
\begin{align}
|n \rangle \otimes | \! \uparrow \rangle & \mapsto c^{11}_{n} |n+1 \rangle \otimes | \! \uparrow \rangle + c^{21}_{n} |n-1 \rangle \otimes | \! \downarrow \rangle, \label{e.updaterule1} \\
|n \rangle \otimes | \! \downarrow \rangle & \mapsto c^{12}_{n} |n+1 \rangle \otimes | \! \uparrow \rangle + c^{22}_{n} |n-1 \rangle \otimes | \! \downarrow \rangle \label{e.updaterule2},
\end{align}
Extend this by linearity to general elements of $\mathcal{H}$. This defines a unitary operator $U$ on $\mathcal{H}$.

Order the basis of $\mathcal{H}$ as follows:
\begin{equation}\label{e.orderedbasis}
\ldots, |-1 \rangle \otimes | \! \uparrow \rangle , \, |-1 \rangle \otimes | \! \downarrow \rangle , \, |0 \rangle \otimes | \! \uparrow \rangle , \, |0 \rangle \otimes | \! \downarrow \rangle , \, |1 \rangle \otimes | \! \uparrow \rangle , \, |1 \rangle \otimes | \! \downarrow \rangle , \ldots .
\end{equation}
In this ordered basis, the matrix representation of $U : \mathcal{H} \to \mathcal{H}$ is given by
\begin{equation}\label{e.umatrixrep}
U = \begin{pmatrix}
{}& \dots & \dots & \dots & \dots & \dots & \dots & \dots & \dots & {} \\
{}& \dots & 0 & c^{12}_{-2} & 0 & 0 & 0 & 0 & \dots & {} \\
{}& \dots & c^{21}_{-1} & 0 & 0 & c^{11}_{-1} & 0 & 0 & \dots & {} \\
{}& \dots & c^{22}_{-1} & 0 & 0 & c^{12}_{-1} & 0 & 0 & \dots & {} \\
{}& \dots & 0 & 0 & c^{21}_{0} & 0 & 0 & c^{11}_{0} & \dots & {} \\
{}& \dots & 0 & 0 & c^{22}_{0} & 0 & 0 & c^{12}_{0} & \dots & {} \\
{}& \dots & 0 & 0 & 0 & 0 & c^{21}_{1} & 0 & \dots & {} \\
{}& \dots & \dots & \dots & \dots & \dots & \dots & \dots & \dots & {}
\end{pmatrix},
\end{equation}
as can be checked readily using the update rule \eqref{e.updaterule1}--\eqref{e.updaterule2}; compare \cite[Section~4]{CGMV}.\footnote{Note that we follow the conventions of \cite{CGMV} here. One could argue that the correct matrix to consider is the transpose of $U$ in \eqref{e.umatrixrep}. To conform with \cite{CGMV} and subsequent papers, we will consider the matrix $U$ as given above in what follows. In the Fibonacci example we discuss below, this does not make a difference since the matrix entries will be real and hence the transpose of $U$ is the inverse of $U$. Since our argument is based on spectral continuity of $U$, and the spectral continuity properties of $U$ and $U^{-1}$ are the same, the final result does not depend on the choice one makes at this juncture.}

Recall that an extended CMV matrix corresponding to Verblunsky coefficients $\{ \alpha_n \}_{n \in \Z}$ has the form
$$
\mathcal{E} = \begin{pmatrix}
{}& \dots & \dots & \dots & \dots & \dots & \dots & \dots & \dots & {} \\
{}& \dots & -\bar\alpha_{-3} \alpha_{-4} & -\rho_{-3} \alpha_{-4} & 0 & 0 & 0 & 0 & \dots & {} \\
{}& \dots & \bar\alpha_{-2} \rho_{-3} & -\bar\alpha_{-2} \alpha_{-3} & \bar\alpha_{-1} \rho_{-2} & \rho_{-1} \rho_{-2} & 0 & 0 & \dots & {} \\
{}& \dots & \rho_{-2} \rho_{-3} & -\rho_{-2} \alpha_{-3} & -\bar\alpha_{-1} \alpha_{-2} & -\rho_{-1} \alpha_{-2} & 0 & 0 & \dots & {} \\
{}& \dots & 0 & 0 & \bar\alpha_0 \rho_{-1} & -\bar\alpha_0 \alpha_{-1} & \bar\alpha_1 \rho_0 & \rho_1 \rho_0 & \dots & {} \\
{}& \dots & 0 & 0 & \rho_0 \rho_{-1} & -\rho_0 \alpha_{-1} & -\bar\alpha_1 \alpha_0 & -\rho_1 \alpha_0 & \dots & {} \\
{}& \dots & 0 & 0 & 0 & 0 & \bar\alpha_2 \rho_1 & -\bar\alpha_2 \alpha_1 & \dots & {} \\
{}& \dots & \dots & \dots & \dots & \dots & \dots & \dots & \dots & {}
\end{pmatrix}.
$$
In particular, if all Verblunsky coefficients with odd index vanish, the matrix becomes (recall that $\rho_n = (1-|\alpha_n|^2)^{1/2}$)
\begin{equation}\label{e.ecmvoddzero}
\mathcal{E} = \begin{pmatrix}
{}& \dots & \dots & \dots & \dots & \dots & \dots & \dots & \dots & {} \\
{}& \dots & 0 & - \alpha_{-4} & 0 & 0 & 0 & 0 & \dots & {} \\
{}& \dots & \bar\alpha_{-2} & 0 & 0 & \rho_{-2} & 0 & 0 & \dots & {} \\
{}& \dots & \rho_{-2} & 0 & 0 & -\alpha_{-2} & 0 & 0 & \dots & {} \\
{}& \dots & 0 & 0 & \bar\alpha_0 & 0 & 0 & \rho_0 & \dots & {} \\
{}& \dots & 0 & 0 & \rho_0 & 0 & 0 & -\alpha_0 & \dots & {} \\
{}& \dots & 0 & 0 & 0 & 0 & \bar\alpha_2 & 0 & \dots & {} \\
{}& \dots & \dots & \dots & \dots & \dots & \dots & \dots & \dots & {}
\end{pmatrix}.
\end{equation}

The matrix in \eqref{e.ecmvoddzero} strongly resembles the matrix representation of $U$ in \eqref{e.umatrixrep}. Note, however, that all $\rho_n$'s need to be real and non-negative for genuine CMV matrices, and this property is not guaranteed when matching \eqref{e.umatrixrep} and \eqref{e.ecmvoddzero}. But this can be easily resolved, as shown in \cite{CGMV}. Concretely, given $U$ as in \eqref{e.umatrixrep}, write
$$
c_n^{kk} = |c_n^{kk}| e^{i \sigma^k_n}, \quad n \in \Z, \; k \in \{ 1,2 \}, \; \sigma^k_n \in [0,2\pi)
$$
and define $\{ \lambda_n \}_{n \in \Z}$ by
$$
\lambda_0 = 1, \; \lambda_{-1} = 1, \; \lambda_{2n+2} = e^{-i \sigma^1_n} \lambda_{2n}, \; \lambda_{2n+1} = e^{i \sigma^2_n} \lambda_{2n-1}.
$$
With the unitary matrix $\Lambda = \mathrm{diag}(\ldots, \lambda_{-1} , \lambda_0 , \lambda_1 , \ldots)$, we then have
$$
\mathcal{E} = \Lambda^* U \Lambda,
$$
where $\mathcal{E}$ is the extended CMV matrix corresponding to the Verblunsky coefficients
\begin{equation}\label{e.correspondence}
\alpha_{2n+1} = 0 , \; \alpha_{2n} = \frac{\lambda_{2n}}{\lambda_{2n-1}} \bar c_n^{21}, \quad n \in \Z.
\end{equation}

In order for one to prove lower bounds for the spreading rates of a quantum walk on the line, the strategy is now clear. One needs to establish solution estimates for a given model that feed into Proposition~\ref{p.dklest}. Once Proposition~\ref{p.dklest} is shown to be applicable, its output provides the input for an application of Proposition~\ref{ac.not.zero} and its corollaries. In the next subsection we present a non-trivial example where this strategy may be implemented and yields lower bounds for the spreading rates of the quantum walk discussed there, which are explicit in terms of the parameters of the model.

\subsection{The Fibonacci Quantum Walk}

We discuss the special case of the Fibonacci quantum walk, which is an example that requires the full extent of the machinery developed in this paper in conjunction with the subordinacy result from \cite{MO} described in Subsection~\ref{ss.subordinacy}. In this example the sequence of coins takes only two different values, and the order in which these two unitary $2 \times 2$ matrices occur is determined by an element of the Fibonacci subshift.

Let us recall how the latter is generated. Consider two symbols, $a$ and $b$. The Fibonacci substitution $S$ sends $a$ to $ab$ and $b$ to $a$. This substitution rule can be extended by concatenation to finite and one-sided infinite words over the alphabet $\{a,b\}$. There is a unique one-sided infinite word that is invariant under $S$, denote it by $u$. It is, in an obvious sense, the limit as $n \to \infty$ of the words $s_n = S^n(a)$. That is, $s_0 = a$, $s_1 =ab$, $s_2= aba$, etc., so that $u = abaababaabaab \ldots$. The Fibonacci subshift $\Omega$ is given by
$$
\Omega = \{ \omega \in \{ a,b \}^\Z : \text{every finite subword of $\omega$ occurs in } u \}.
$$

Take $\theta_a , \theta_b \in (-\frac{\pi}{2}, \frac{\pi}{2})$ and consider the rotations
$$
C_a = \begin{pmatrix} \cos \theta_a & -\sin \theta_a \\ \sin \theta_a & \cos \theta_a \end{pmatrix}, \quad C_b = \begin{pmatrix} \cos \theta_b & -\sin \theta_b \\ \sin \theta_b & \cos \theta_b \end{pmatrix}.
$$
Given $\omega \in \Omega$, the associated sequence of coins $\{ C_{\omega,n} \}_{n \in  \Z}$ is given by $C_{\omega,n} = C_{\omega_n}$. The associated unitary operator will be denoted by $U_\omega$. Inspecting \eqref{e.correspondence} one sees that $U_\omega$ already has the form of an extended CMV matrix and we will therefore denote it by $\mathcal{E}_\omega$ to emphasize this fact.

Let us relabel the basis elements, ordered as in \eqref{e.orderedbasis}, and write them as $(\varphi_n)_{n \in \Z}$. We consider a non-zero finitely supported initial state $\psi \in \ell^2(\Z)$ and study the spreading in space of $\mathcal{E}_\omega^n \psi$ as $|n| \to \infty$ with respect to this basis.

Implementing the strategy outlined at the end of the previous subsection, \cite{DMY2} establishes solution estimates in the form needed in Proposition~\ref{p.dklest}. Thus, Proposition~\ref{ac.not.zero} and Corollaries~\ref{c.coroone} and \ref{c.corotwo} may be applied, and one obtains the following result:

\begin{theorem}\label{t.qwmain}
Define:
\begin{enumerate}

\item $I(z) = \Re(z)^2(\sec^2\theta_a + \sec^2\theta_b) + (\Re(z^2) \sec\theta_a\sec\theta_b - \tan\theta_a\tan\theta_b)^2 -2(\Re(z)^2 \sec^2\theta_a\sec^2\theta_b(\Re(z^2)-\sin\theta_a \sin\theta_b)) - 1$;

\item $C(z) = \max\{2+\sqrt{8+I(z)}, (\sec\theta_a)^{-1} , (\sec \theta_b)^{-1}\}$;

\item $\gamma_1(z) = \frac{\log \left(1+\frac{1}{4C(z)^2} \right)}{16 \log \phi}$, where $\phi$ is the golden mean;

\item $\gamma_2(z) = 4\log_2 K(z)$, where $K$ is a $z$-dependent constant;\footnote{The paper \cite{DMY2} contains an explicit expression for $K(z)$. Since this description is somewhat involved, we do not reproduce it here and refer the interested reader to \cite{DMY2}.}

\item $\beta(z) = \frac{2\gamma_1 (z)}{\gamma_1 (z) + 2\gamma_2 (z)+1}$.

\end{enumerate}
Then, for all $\psi, \omega, p$ as above, we have
$$
\tilde \beta^\pm_{\omega, \psi}(p) \ge \max\big\{ \beta(z) : z\in \mathrm{supp}\ \mu_{\mathcal{E}_\omega, \psi} \big\},
$$
where $\mu_{\mathcal{E}_\omega, \psi}$ denotes the spectral measure associated with the unitary operator $\mathcal{E}_\omega$ and the state $\psi$.
\end{theorem}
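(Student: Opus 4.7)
The plan is to combine the subordinacy machinery of Proposition~\ref{p.dklest} with the dynamical lower bound of Corollary~\ref{c.coroone} to deduce the theorem. After the relabeling $(\varphi_n)_{n \in \Z}$, the effective indexing set is $A=\Z$, so $d=1$, and Corollary~\ref{c.coroone} says that a non-trivial $\alpha$-continuous component of the spectral measure $\mu_{\mathcal{E}_\omega,\psi}$ forces $\tilde\beta_{\omega,\psi}^\pm(p)\ge\alpha$. It therefore suffices to show that, for every $z^\ast\in\mathrm{supp}\,\mu_{\mathcal{E}_\omega,\psi}$ and every $\varepsilon>0$, this spectral measure has a non-zero $(\beta(z^\ast)-\varepsilon)$-continuous part.

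First I would import the solution estimates established in \cite{DMY2} for the transfer matrices associated with $\mathcal{E}_\omega$. These give, for each $z$ on the spectrum and each normalized solution of \eqref{e.tmequ}--\eqref{e.tmequnorm}, a lower bound $\|\xi\|_L\gtrsim L^{\gamma_1(z)}$ and an upper bound of the form $\|\xi\|_L\lesssim L^{2\gamma_2(z)+1}$; the exponent $2\gamma_2(z)+1$ arises from converting the natural Fibonacci length scale of \cite{DMY2} (Fibonacci numbers / powers of the golden mean $\phi$) into the linear running length $L$ that appears in the definition of $\|\cdot\|_L$ inside Proposition~\ref{p.dklest}. Feeding both bounds into Proposition~\ref{p.dklest} yields that, on any Borel set $S$ where the estimates hold with uniform constants, every spectral measure of $\mathcal{E}_\omega$ restricted to $S$ is purely $\alpha(z)$-continuous, with
\[
\alpha(z)=\frac{2\gamma_1(z)}{\gamma_1(z)+2\gamma_2(z)+1}=\beta(z).
\]

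Next I would localize around a point $z^\ast\in\mathrm{supp}\,\mu_{\mathcal{E}_\omega,\psi}$. Since $I(z),C(z),K(z)$ are continuous functions of $z\in\partial\D$, so are $\gamma_1,\gamma_2$ and hence $\beta$. Thus for any $\varepsilon>0$ there is an arc $N\ni z^\ast$ on which $\beta(z)\ge\beta(z^\ast)-\varepsilon$ and on which the implicit constants in the solution estimates may be taken uniformly. Because $z^\ast$ lies in the support of $\mu_{\mathcal{E}_\omega,\psi}$, we have $\mu_{\mathcal{E}_\omega,\psi}(N)>0$, and by the previous step the restriction $\mu_{\mathcal{E}_\omega,\psi}|_N$ is a non-zero $(\beta(z^\ast)-\varepsilon)$-continuous measure. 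In particular $P_{(\beta(z^\ast)-\varepsilon)c}\psi\ne 0$, so Corollary~\ref{c.coroone} with $d=1$ gives $\tilde\beta_{\omega,\psi}^\pm(p)\ge\beta(z^\ast)-\varepsilon$. Letting $\varepsilon\downarrow0$ and taking the supremum over $z^\ast\in\mathrm{supp}\,\mu_{\mathcal{E}_\omega,\psi}$ yields the asserted bound.

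The main obstacle is the very first step: extracting from \cite{DMY2} the lower and upper solution-growth estimates in the exact form required by Proposition~\ref{p.dklest}, that is, uniformly over all normalized initial conditions \eqref{e.tmequnorm} and with the correct conversion between Fibonacci block lengths and the running length $L$. The identification of the converted upper exponent as $2\gamma_2(z)+1$ rather than $\gamma_2(z)$ is precisely what produces the denominator in the definition of $\beta(z)$; once this bookkeeping is in place, the rest of the argument is a straightforward concatenation of results already in hand.
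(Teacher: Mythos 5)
Your proposal is correct and follows exactly the route the paper itself indicates for Theorem~\ref{t.qwmain}: the paper gives no self-contained argument here, but reports that \cite{DMY2} supplies the transfer-matrix solution estimates, Proposition~\ref{p.dklest} (from \cite{MO}) converts them into $\alpha$-continuity of the spectral measures, and Proposition~\ref{ac.not.zero} together with Corollaries~\ref{c.coroone} and \ref{c.corotwo} (with $d=1$ after relabeling the basis) yield the lower bound on $\tilde\beta^\pm_{\omega,\psi}(p)$. Your reconstruction of the missing glue --- the Fibonacci-block-to-$L$-scale conversion producing the effective upper exponent $2\gamma_2(z)+1$ so that $\frac{2\gamma_1}{\gamma_1+(2\gamma_2+1)}=\beta(z)$, and the localization about $z^\ast$ using continuity of $\gamma_1,\gamma_2,\beta$ in $z$ --- is consistent with the division of labour the paper assigns to the three sources.
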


This theorem was stated and proved in \cite{DMY2}. The proof given there used Proposition~\ref{p.dklest}, proved in \cite{MO}, and Proposition~\ref{ac.not.zero} and Corollaries~\ref{c.coroone} and \ref{c.corotwo}, proved in this paper. Thus, these three papers work together in establishing Theorem~\ref{t.qwmain}.

\end{document}